\xpatchcmd{\itemize}{\def\makelabel}{\setlength{\itemsep}{5pt}\def\makelabel}{}{}
\xpatchcmd{\enumerate}{\def\makelabel}{\setlength{\itemsep}{5pt}\def\makelabel}{}{}
\setlist[enumerate,itemize]{topsep=4pt, parsep=3pt, itemsep=3pt, 
leftmargin=0.5cm, 
listparindent=20pt, labelsep=5pt, itemindent=0pt, partopsep=5pt}
\DeclarePairedDelimiter\abs{\lvert}{\rvert}
\def\bg#1{\bigl({#1}\bigr)}
\def\bgg#1{\biggl({#1}\biggr)}
\def\C{\mathbb{C}}
\def\R{\mathbb{R}}
\def\rmand{\quad\hbox{ and }\quad}
\DeclarePairedDelimiter\babs{{\bigl\lvert}}{{\bigr\rvert}}
\def\fl#1{\lfloor{#1}\rfloor}
\def\bfl#1{\bigl\lfloor\,{#1}\,\bigr\rfloor}
\def\cl#1{\lceil{#1}\rceil}
\numberwithin{equation}{section}
\numberwithin{figure}{section}
\def\AAM{Adv.\ in Appl.\ Math.\ }
\def\AMS{Amer.\ Math.\ Soc.}
\def\ANM{Appl.\ Numer.\ Math.\ }
\def\JMAA{J.\ Math.\ Anal.\ Appl.\ }
\DeclareMathOperator\sgn{\mathrm{sgn}}
\crefname{case}{Case}{Cases}
\crefname{case}{Case}{Cases}
\crefname{cond}{Condition}{Conditions}
\Crefname{cond}{Condition}{Conditions}
\crefname{ineq}{Ineq.}{Ineqs.}
\Crefname{ineq}{Inequality}{Inequalities}
\crefname{rec}{Recurrence}{Recurrences}
\Crefname{rec}{Recurrence}{Recurrences}
\crefname{icond}{Condition}{Conditions}
\Crefname{icond}{Condition}{Conditions}
\crefname{thm}{Theorem}{Theorems}
\Crefname{thm}{Theorem}{Theorems}
\newtheorem{thm}{Theorem}[section]
\newtheorem{lem}[thm]{Lemma}
\newtheorem{dfn}[thm]{Definition}
\newtheorem{eg}[thm]{Example}
\newtheorem{conj}[thm]{Conjecture}
\theoremstyle{definition}
\newtheorem{case}{Case}[thm]
\makeatletter \@addtoreset{equation}{section} \makeatother
\newcommand{\noo}{normalised polynomial sequence satisfying \cref{rec11}}
\begin{document}

\title[]{{Piecewise interlacing zeros of polynomials}}

\author[D.G.L. Wang]{David G.L. Wang$^\dag$$^\ddag$}
\address{
$^\dag$School of Mathematics and Statistics, Beijing Institute of Technology, 102488 Beijing, P. R. China\\
$^\ddag$Beijing Key Laboratory on MCAACI, Beijing Institute of Technology, 102488 Beijing, P. R. China}
\email{david.combin@gmail.com}

\author[J.J.R. Zhang]{Jerry J.R. Zhang}
\address{
$^\dag$School of Mathematics and Statistics, Beijing Institute of Technology, 102488 Beijing, P. R. China}
\email{jrzhang.combin@gamil.com}
\thanks{\hskip-11pt 
D.G.L. Wang is supported by the Beijing Institute of Technology Research Fund Program for Young Scholars (grant No. 2015CX04016), the General Program of National Natural Science Foundation of China (grant No. 11671037), and the Fundamental Research Funds for the Central Universities (grant No. 20161742027).}
\subjclass[2010]{03D20, 30C15, 30E10}

\keywords{interlacing zeros; piecewise; real-rootedness; recurrence; root distribution}

\begin{abstract}   
We introduce the concept of piecewise interlacing zeros for studying the relation of root distribution of two polynomials. The concept is pregnant with an idea of confirming the real-rootedness of polynomials in a sequence. Roughly speaking, one constructs a collection of disjoint intervals such that one may show by induction that consecutive polynomials have interlacing zeros over each of the intervals. We confirm the real-rootedness of some polynomials satisfying a recurrence with linear polynomial coefficients. This extends Gross et al.'s work where one of the polynomial coefficients is a constant.
\end{abstract}

\maketitle

\tableofcontents
\parskip 9pt

\section{Introduction}

The root distribution of a single polynomial is a long-standing topic all along the history of mathematics; 
see Rahman and Schmeisser's book \cite{RS02B}. 
For instance, the significance of real-rootedness and stability of polynomials 
can be found from Stanley~\cite[\S 4]{Sta00} and Borcea and Br\"anden~\cite{BB09}.

Motived by the LCGD conjecture from topological graph theory,
Gross, Mansour, Tucker and the first author~\cite{GMTW16-01,GMTW16-10}
studied the root distribution of polynomials satisfying some recurrences 
of order two, with one of the polynomial coefficients in the recurrence linear and the other constant. 
In this paper, we continue to study the root distribution of polynomials defined by a recurrence of order two,
but with both the polynomial coefficients linear. 
It turns out that this change brings much richer root geometry.
Orthogonal polynomials have a closed relation with
such recurrences; see Szeg\H o~\cite{Sze59B}, 
Andrews, Richard and Ranjan~\cite{ARR99B},
and Stahl and Totik~\cite{ST92B}. 
Equally closed connection with quasi-orthogonal polynomials
was pointed by Brezinski, Driver and Redivo-Zaglia~\cite{BDR04}.

A popular way to show the real-rootedness of all polynomials in a sequence 
is to show that consecutive polynomials have interlacing real zeros by induction; see Liu and Wang~\cite{LW07}. 
The classical notion of interlacing zeros can be found from 
Rahman and Schmeisser's book~\cite{RS02B}.
Unfortunately and commonly encountered,
consecutive polynomials in a sequence, such as those in \cref{sec:main},
do not have interlacing zeros over the whole real line.

We introduce the concept of piecewise interlacing zeros for studying the relation of root distribution of two polynomials. The concept is pregnant with an idea of confirming the real-rootedness of polynomials in a sequence.
Roughly speaking, one constructs a collection of disjoint intervals such that one may show by induction that consecutive polynomials have interlacing zeros over each of the intervals. 

We should mention that He and Saff~\cite{HS94} 
considered the root distributions of sequences of Faber polynomials 
associated with a compact set on the complex plane. 
By virtue of choosing the compact set to be the $m$-cusped hypocycloid with the parametric equation
\[
z=e^{i\theta}+\frac{1}{m-1}e^{(1-m)i\theta},
\] 
the roots of corresponding Faber polynomials are located on the $m$ line segments
each of which connects the origin and a complex number $\frac{m}{m-1}e^{2k\pi i/m}$ for some $k=0,1,\ldots,m-1$.
They proved that consecutive Faber polynomials for any given~$m$ interlace on each line segment.

The idea of piecewise interlacing zeros presented in this paper has an essential 
difference from the one of He and Saff's. 
Our idea needs a discovery of disjoint intervals from a single line to form the pieces,
such that the real-rootedness of each polynomial on each resulting interval can be shown by induction.
In comparison, He and Saff's way of forming the pieces is somehow straightforward, 
for at least zeros lying on distinct lines are not said to be interlacing,
although it is literally true to say that the zeros are ``piecewise'' interlacing for the above Faber polynomials.

Furthermore, the extensive concept of piecewise interlacing 
inherits some attributes of the classical notion of interlacing property.  
For example, Jordaan and To\'okos \cite{JT09} call the interlacing 
properties among any two of the zero sets of three consecutive polynomials in the sequence
the ``triple interlacing property''. In \cref{thm:pl}, the piecewise interlacing properties
is shown to have such triple behaviour either. 

This paper is organised as follows. 
\Cref{sec:piecewise} explains the idea of confirming the real-rootedness
of polynomials in a sequence based on the concept of piecewise interlacing zeros.
In \S\,\ref{sec:main}, 
we give an application of the idea as \cref{thm:rr} with some numerical examples.
A proof of \cref{thm:rr} lies in \S\,\ref{sec:pf}. 
We end this paper with a conjecture in \S\,\ref{sec:conj}.

\section{Piecewise interlacing zeros}\label{sec:piecewise}

The notion of interlacing zeros can be found from Rahman and Schmeisser's book~\cite[Definition 6.3.1]{RS02B}. 

\begin{dfn}
We say that an unordered pair $(X,Y)$ of sets of real numbers {\em interlaces} if (i) the number of elements in $X$ and $Y$ are equal or differ by one,
and (ii) there exists an ordering 
$\alpha_1\le\beta_1\le\alpha_2\le\beta_2\le\cdots \le\alpha_\nu\le\beta_\nu\le\cdots$,
where $\alpha_1$, $\alpha_2$, $\ldots$ are the elements of one of the sets and $\beta_1$, $\beta_2$, $\ldots$ are those of the other; and {\em strictly interlaces} if no equality in the foregoing ordering holds.
Two real-rooted polynomials $P$ and $Q$ are said to {\em have interlacing zeros} 
if the pair of their zero sets interlaces.
A degenerated case is that the pair of any singleton and the empty set~$\emptyset$ interlaces, and so does the pair $(\emptyset,\, \emptyset)$.
\end{dfn}

We introduce the concept of piecewise interlacing pairs. 

\begin{dfn}
We say a pair $X$ and $Y$ of sets of real numbers \emph{piecewise interlaces} 
(resp.\ \emph{piecewise strictly interlaces}) 
on the disjoint union $\sqcup_{\lambda\in\Lambda} I_\lambda$ of intervals,
if the subset pair $(X\cap I_\lambda,\,Y\cap I_\lambda)$ for each $\lambda\in\Lambda$ interlaces 
(resp.\ strictly interlaces). 
\end{dfn}

Suppose that we are going to show the real-rootedness of 
every polynomial in a polynomial sequence $\{W_n(z)\}_n$,
or to study the relation of root distribution of consecutive polynomials $W_n(z)$ and $W_{n+1}(z)$.
With the aid of the concept of piecewise interlacing zeros, we can do this in two steps.
First, we deal with repeated zeros of each $W_n(z)$, and with common zeros of different polynomials $W_n(z)$,
individually. This step may be done in specific ways; see~\cite{JW17X} for instance.
Then, secondly, we can suppose that the polynomials $W_n(z)$ have neither common zeros nor repeated zeros.

The basic idea of confirming the real-rootedness is to show that consecutive polynomials 
have piecewise interlacing zeros by induction using the intermediate value theorem.
The difficulty lies in the division of the real line into disjoint intervals so that an induction proof works.
More precisely, the points that one would like to assign to be an end of the desired intervals 
often obey a regular law of sign changing in the subscript $n$. 
For example, traditionally one considers the whole real line
on which polynomials are expected to have interlacing zeros, that is, 
to take the interval ends to be $-\infty$ and $+\infty$.
That is partially because that the signs of $\lim_{x\to -\infty}W_n(x)$ and $\lim_{x\to \infty}W_n(x)$ 
for each polynomial $W_n(x)$ in real~$x$ can be simply determined by 
the sign of leading coefficient of $W_n(x)$ and the parity of the degree of $W_n(x)$.
It is usually easy to figure out the sign of leading coefficient and the parity of the degree for each $W_n(x)$
from a recursive definition.

Of course it is possible that two polynomials may not have interlacing zeros over the whole real line.
It is not uncommon that the intervals are well hidden behind the appearance 
of the given polynomial sequence.
Besides the points $\pm\infty$, one may also select ``isolated limits of zeros'' to be interval ends;
see Beraha, Kahane and Weiss~\cite{BKW75,BKW78} and Sokal~\cite{Sok04} for definition. 
We will illustrate this idea in the next section.

In fact, there has been a number of study on the limiting root distribution of polynomials 
in a sequence; see Szeg\H o~\cite{Sze24} for instance. Stanley on his website \cite{StaW}
provides some figures for the root distribution of some polynomials in a sequence 
arising from combinatorics. 
Bleher and Mallison~\cite{BM06} studied the asymptotics and zeros of Taylor polynomials
for linear combinations of exponentials.
In answering a problem posed by Herbert Wilf,
Boyer and Goh~\cite{BG07} figured out that the set of limit points of zeros of Euler polynomials,
called its ``zero attractor'',  to be the union of an interval and a curve closely related to the Szeg\H o's curve;
See also~\cite{BG08,GHR09}.

\section{An application of the piecewise interlacing zeros}\label{sec:main}

Let $a,b,c,d\in\R$ and $ac\ne0$. 
In this paper, we consider polynomials $W_n(z)$ satisfying the recurrence
\begin{equation}\label[rec]{rec11}
W_n(z) = (az+b)W_{n-1}(z)+(cz+d)W_{n-2}(z). 
\end{equation}
We call the sequence $\{W_n(z)\}_{n\ge0}$ \emph{normalised} if $W_0(z)=1$ and $W_1(z)=z$. 
For any complex number $z=re^{i\theta}$ with $\theta\in(-\pi,\,\pi]$,
we use the square root notation $\sqrt{z}$ to denote 
the number $\sqrt{r}e^{i\theta/2}$.
\Cref{lem:00}
is the base of our study, which can be found in~\cite{KL93B,GMTW16-01,GMTW16-10}. 

\begin{lem}\label{lem:00}
Let $A,B\in\C$.
Suppose that $W_0=1$ and $W_n=AW_{n-1}+BW_{n-2}$ for $n\ge 2$. Then for $n\ge 0$, we have 
\[
W_n=\begin{cases}
{\displaystyle {\alpha_+\lambda_+^n+\alpha_-\lambda_-^n}},&\textrm{ if $\Delta\neq0$},\\[5pt]
{\displaystyle {A+n(2W_1-A)\over 2}\cdot\bgg{{A\over 2}}^{n-1}},&\textrm{ if $\Delta=0$},
\end{cases}
\]
where $\lambda_\pm=(A\pm\sqrt{\Delta})/2$ with $\Delta=A^2+4B$
are the eigenvalues, and 
\[
\alpha_\pm=\frac{\sqrt{\Delta}\pm(2W_1-A)}{2\sqrt{\Delta}}. 
\] 
\end{lem}

We employ the notations
\begin{align*}
\Delta(z)&=A(z)^2+4B(z)
=a^2z^2+(2ab+4c)z+(b^2+4d),\\[4pt]
\alpha_\pm(z)&=\frac{\sqrt{\Delta(z)}\pm(2W_1(z)-A(z))}{2\sqrt{\Delta(z)}},\rmand\\[4pt]
g(z)&=-\alpha_+(z)\alpha_-(z)\Delta(z)=(1-a)z^2-(b+c)z-d,
\end{align*}
whose zeros are respectively
\begin{equation}\label{xd}
x_\Delta^{\pm}=\frac{-ab-2c\pm2\sqrt{\Delta_\Delta}}{a^2}
\end{equation}
where $\Delta_\Delta=-a^2d+abc+c^2$, and
\begin{equation}\label{xg}
x_g^\pm=\begin{cases}
\displaystyle
\frac{b+c}{2(1-a)}\pm\frac{\sqrt{\Delta_g}}{2\abs{1-a}},&\text{if $a\ne1$},\\[8pt]
\displaystyle -{d\over b+c},&\text{if $a=1$ and $b+c\ne0$},
\end{cases}
\end{equation}
where  $\Delta_g=(b+c)^2+4d(1-a)$.
In fact, the function $g(z)=-d$ is a constant when $a=1$ and $b+c=0$.
For convenience, we also use  
\[
x_A=-b/a\rmand x_B=-d/c.
\]
From \cref{rec11}, it is routine to check that $W_n(z)$ has leading coefficient $a^{n-1}z^n$, and that 
\begin{align}
W_n(x_g^\pm)&=(x_g^\pm)^n,\label{W:xg}\\[5pt]
W_n(x_\Delta^\pm)
&=\frac{A(x_\Delta^\pm)+nh(x_\Delta^\pm)}{2}\cdot\bgg{\frac{A(x_\Delta^\pm)}{2}}^{n-1},\label{W:xd}
\end{align}
where $h(z)=2W_1(z)-A(z)=(2-a)z-b$. 

The polynomials $W_n(z)$ in \cref{lem:00} satisfy another recurrence as
presented in \cref{lem:rec2}, which helps in proving \cref{thm:rr,thm:pl}.

\begin{lem}\label{lem:rec2}
Let $A,B\in\R\setminus\{0\}$.
Suppose that $W_0=1$ and $W_n=AW_{n-1}+BW_{n-2}$ for $n\ge 2$. 
Then $W_n=(A^2+2B)W_{n-2}-B^2W_{n-4}$ for $n\ge4$.
\end{lem}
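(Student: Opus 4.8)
The plan is to prove this step-two identity by repeated substitution into the given order-two recurrence, thereby avoiding any appeal to the closed forms of \cref{lem:00} and, in particular, any case distinction on the discriminant~$\Delta$. The target relation $W_n=(A^2+2B)W_{n-2}-B^2W_{n-4}$ couples only indices of the same parity, so the guiding idea is to start from $W_n=AW_{n-1}+BW_{n-2}$, descend once to replace the ``wrong-parity'' term $W_{n-1}$, and then descend a second time to eliminate the residual odd-index term.

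Concretely, first I would apply the recurrence to $W_{n-1}$, valid for $n\ge3$, to obtain
\[
W_n=A\bg{AW_{n-2}+BW_{n-3}}+BW_{n-2}=(A^2+B)W_{n-2}+ABW_{n-3}.
\]
This isolates the unwanted term $ABW_{n-3}$. Next I would use the recurrence in the rearranged form $AW_{n-3}=W_{n-2}-BW_{n-4}$, obtained from $W_{n-2}=AW_{n-3}+BW_{n-4}$ and valid for $n\ge4$; multiplying by~$B$ gives $ABW_{n-3}=BW_{n-2}-B^2W_{n-4}$. Substituting this back yields
\[
W_n=(A^2+B)W_{n-2}+BW_{n-2}-B^2W_{n-4}=(A^2+2B)W_{n-2}-B^2W_{n-4},
\]
as desired. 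No division occurs, so the argument is insensitive to whether $A$ or~$B$ vanishes and to the value of $W_1$.

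There is essentially no obstacle here beyond index bookkeeping: the two substitutions require $n-1\ge2$ and $n-2\ge2$ respectively, which together give exactly the stated range $n\ge4$. As a conceptual cross-check one could instead argue through \cref{lem:00}: since the eigenvalues satisfy $\lambda_++\lambda_-=A$ and $\lambda_+\lambda_-=-B$, the squares $\lambda_\pm^2$ are precisely the two roots of $x^2-(A^2+2B)x+B^2=0$, whence each of $\lambda_+^n$ and $\lambda_-^n$, and therefore their linear combination $W_n$, obeys the claimed step-two recurrence. I would nonetheless prefer the direct substitution, precisely because it dispatches the degenerate case $\Delta=0$ (where $\lambda_+=\lambda_-$) uniformly rather than forcing a separate repeated-root computation.
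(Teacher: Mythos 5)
Your substitution argument is correct: eliminating $W_{n-1}$ and then the residual $ABW_{n-3}$ term via $ABW_{n-3}=BW_{n-2}-B^2W_{n-4}$ involves no division and yields the identity exactly for $n\ge4$. The paper states \cref{lem:rec2} without proof, treating it as routine, and your direct computation is precisely the intended elementary verification, so there is nothing to reconcile.
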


To the end of this paper, let $\{W_n(z)\}_n$ be a \noo\ with $a,b,d<0$ and $c>0$. 
We introduce the notation 
\[
c^\pm=\pm2\sqrt{d(a-1)}-b,
\]
as the zeros of the discriminant~$\Delta_g$ in $c$. It is clear that 
\[
\max(0,\,c^-)<c^+.
\] 
From definition, we have 
\[
x_g^\pm\in\R\qquad\iff\qquad 
c\le c^-\quad\text{or}\quad c\ge c^+.
\]
In this case, we denote 
\[
J_g=(x_g^-,\,x_g^+).
\]
Let $R_n$ be the zero set of~$W_n(z)$. 
It is a multi-set when $W_n(z)$ has repeated zeros,
though all zeros considered in this paper turn out to be simple eventually. 
Denote 
\[
R_n\cap J=R_n^J\qquad\text{for any interval $J$}. 
\]
For the statement of \cref{thm:rr}, we denote 
\[
n^+=-\frac{A(x_\Delta^+)}{h(x_\Delta^+)}
\qquad\text{whenever $h(x_\Delta^+)\ne0$}.
\]

\begin{thm}\label{thm:rr}
Let $\{W_n(z)\}_n$ be a \noo\ with $a,b,d<0$ and $c>0$. 
Define 
\[
J_1=(x_\Delta^-,\,x_A),\qquad
J_2=(x_A,\,x_\Delta^+),\qquad
J_3=[\,x_\Delta^+,\,x_g^-),\qquad
J_4=(x_g^+,\,0\,].  
\]
\begin{itemize}
\itemsep 5pt
\item[(i)]
If $c\le c^-$, then every polynomial $W_n(z)$ is real-rooted, 
\[
\babs{R_n^{J_1}}=\fl{(n-1)/2},\qquad
\babs{R_n^{J_2\cup J_3}}=\fl{n/2},\rmand
\babs{R_n^{J_4}}=1.
\]
Moreover, we have
\[
\babs{R_n^{J_3}}=\begin{cases}
1,&\text{if $\Delta_\Delta>\Delta_g$ and $n\ge n^+$};\\[3pt]
0,&\text{otherwise}.
\end{cases}
\]
\item[(ii)]
If $c\ge c^+$, then every polynomial $W_{2n}(z)$ is real-rooted, and 
\[
\babs{R_{2n}^{J_1}}=\babs{R_{2n}^{J_2}}=n-1\rmand
\babs{R_{2n}^{J_3}}=\babs{R_{2n}^{J_4}}=1. 
\]
\item[(iii)]
If $c>c^+$ and the polynomial $W_3(z)$ has zeros $\xi_{3,2}\le \xi_{3,3}$ lying in the interval~$J_g$,
then every polynomial $W_{2n+1}(z)$ is real-rooted, and
\[
\babs{R_{2n+1}^{J_1}}=n-1,\qquad
\babs{R_{2n+1}^{J_2}}=n,\rmand
\babs{R_{2n+1}^{I_3}}=\babs{R_{2n+1}^{I_4}}=1,
\]
where $I_3=(x_g^-,\,\xi_{3,2})$ and $I_4=(\xi_{3,3},\,x_g^+)$ for $n\ge 2$. 
\end{itemize}
\end{thm}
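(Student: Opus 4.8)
The plan is to combine the closed form of \cref{lem:00} with a piecewise intermediate-value/interlacing induction along the recurrence \cref{rec11}, using the second recurrence of \cref{lem:rec2} for the subsequence statements (ii) and (iii). Before counting anything I would pin down the sign data that makes the four intervals meaningful. Writing $A(z)=az+b$ and $B(z)=cz+d$, the hypotheses $a,b,d<0<c$ give $\Delta_\Delta=-a^2d+abc+c^2>0$, so $x_\Delta^\pm$ are real; since $\Delta(z)$ is an upward parabola with $\Delta(x_A)=4B(x_A)<0$, one obtains the ordering $x_\Delta^-<x_A<x_\Delta^+$, so that $J_1$ and $J_2$ genuinely split the band $(x_\Delta^-,x_\Delta^+)$ at the zero $x_A$ of $A$. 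I would likewise record the sign of $\Delta$ (negative on the band, positive outside), the position of $x_g^\pm$ and $x_B=-d/c$ relative to $x_\Delta^+$, and that $x_\Delta^+<x_g^-$ is governed by $\Delta_\Delta>\Delta_g$. The special evaluations \eqref{W:xg} and \eqref{W:xd} are the backbone of every endpoint sign computation: $W_n(x_g^\pm)=(x_g^\pm)^n$ has sign determined by the sign of $x_g^\pm$ and the parity of $n$, while $W_n(x_\Delta^\pm)$ carries the linear-in-$n$ factor $A(x_\Delta^\pm)+nh(x_\Delta^\pm)$ whose vanishing at $n=n^+$ is exactly what produces the two cases for $\babs{R_n^{J_3}}$.

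The heart of the argument is the band $(x_\Delta^-,x_\Delta^+)$, where $\Delta(z)<0$ and the eigenvalues are complex conjugates $\lambda_\pm=\rho e^{\pm i\phi}$ with $\rho=\sqrt{-B(z)}$ and $\alpha_-=\overline{\alpha_+}$. Here \cref{lem:00} collapses to the real oscillation
\[
W_n(z)=2\rho(z)^{n}\,|\alpha_+(z)|\,\cos\!\bigl(n\phi(z)+\psi(z)\bigr),\qquad \psi=\arg\alpha_+,
\]
so the zeros of $W_n$ in the band are precisely the points where $n\phi(z)+\psi(z)$ is an odd multiple of $\pi/2$. I would show that $\phi$ is strictly increasing in $z$, tending to $0$ at $x_\Delta^-$ and to $\pi$ at $x_\Delta^+$, and that the value $\phi=\pi/2$ is attained exactly at $x_A$ (where $\Re\lambda_\pm=A/2$ vanishes). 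Counting the odd multiples of $\pi/2$ swept out on each side of $x_A$ and tracking the bounded correction $\psi$ then yields $\babs{R_n^{J_1}}=\lfloor(n-1)/2\rfloor$ together with the $J_2$ contribution to $\babs{R_n^{J_2\cup J_3}}=\lfloor n/2\rfloor$.

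Outside the band the eigenvalues are real and distinct, so there is no forced oscillation and the number of zeros on $J_3$ and $J_4$ is small; I would pin it to a single root on each by the endpoint signs together with the interlacing induction. On $J_4=(x_g^+,0]$ (one checks $x_g^+<0$ when $c\le c^-$) one compares $W_n(x_g^+)=(x_g^+)^n$ with the constant term $W_n(0)$ to force exactly one sign change; on $J_3=[x_\Delta^+,x_g^-)$ one compares $W_n(x_\Delta^+)$ from \eqref{W:xd} with $W_n(x_g^-)=(x_g^-)^n$, and the sign of $A(x_\Delta^+)+nh(x_\Delta^+)$ produces a root precisely when $\Delta_\Delta>\Delta_g$ (which is what makes $J_3$ nonempty) and $n\ge n^+$. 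The global count $\lfloor(n-1)/2\rfloor+\lfloor n/2\rfloor+1=n$ then confirms that the pieces carry all $n$ zeros, giving real-rootedness in (i). For (ii) and (iii), where $c\ge c^+$ forces $b+c>0$ and hence $x_g^+>0$, the value $(x_g^+)^n$ no longer alternates with $n$ and the full sequence fails to interlace; instead I would rerun the band-plus-endpoint analysis within a fixed parity class, replacing \cref{rec11} by the recurrence of \cref{lem:rec2}, and use the prescribed zeros $\xi_{3,2}\le\xi_{3,3}$ of $W_3$ in $J_g$ as the seed for $I_3$ and $I_4$.

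Finally, to upgrade these counts to genuine interlacing and drive the induction, I would evaluate $W_n$ at the zeros of the previous polynomial: at a zero $\xi$ of $W_{n-1}$, \cref{rec11} gives $W_n(\xi)=B(\xi)W_{n-2}(\xi)$, and at a zero $\xi$ of $W_{n-2}$, \cref{lem:rec2} gives $W_n(\xi)=-B(\xi)^2W_{n-4}(\xi)$. The second identity is the clean one, since $-B^2\le0$ forces the sign of $W_n$ at $\xi$ to oppose that of $W_{n-4}$ and so drives the parity-class interlacing in (ii) and (iii); the first requires the sign of $B$, i.e.\ the position of $x_B$, to be controlled piece by piece. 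The main obstacle I anticipate is the endpoint bookkeeping where the pieces meet: one must show that the oscillatory count on the band and the single-root counts on $J_3,J_4$ glue together with the correct signs at $x_\Delta^\pm$, $x_g^\pm$ and $x_A$, so that the piecewise intermediate-value argument places exactly the claimed number of roots on each interval with none lost or double-counted at the shared endpoints. Reconciling the conditional $J_3$ root (the $n^+$ threshold and the $\Delta_\Delta>\Delta_g$ dichotomy) with the global total $n$ is the most delicate point, and the base cases of the induction — especially the hypothesis on $W_3$ in (iii) — must be checked by hand.
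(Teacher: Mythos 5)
Your skeleton agrees with the paper's in most respects: a preliminary sign lemma at the cutting points $x_\Delta^\pm$, $x_A$, $x_B$, $x_g^\pm$ (the paper's \cref{--+-:lem:basic}), the evaluations \cref{W:xg,W:xd}, the identities $W_n(\xi)=B(\xi)W_{n-2}(\xi)$ at zeros of $W_{n-1}$ and $W_n(\xi)=-B^2(\xi)W_{n-4}(\xi)$ at zeros of $W_{n-2}$ to drive the interlacing, the degree count $\fl{(n-1)/2}+\fl{n/2}+1=n$ to conclude real-rootedness, and the passage to parity classes via \cref{lem:rec2} for (ii) and (iii). Where you genuinely depart from the paper is in how you count zeros inside the band $(x_\Delta^-,x_\Delta^+)$: you use the closed form of \cref{lem:00} with complex conjugate eigenvalues, $W_n=2|\alpha_+|\rho^n\cos(n\phi+\psi)$, and count crossings of odd multiples of $\pi/2$, whereas the paper never leaves the recurrence --- it checks $n\le 4$ by hand and then runs a pure intermediate-value induction on the sign pattern of $W_n$ at the zeros of $W_{n-1}$ and $W_{n-2}$. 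Your route is the classical orthogonal-polynomial style argument; it gives the counts $\fl{(n-1)/2}$ and $\fl{n/2}$ in one stroke for every $n$ without base cases, and it makes the role of $x_A$ (where $\phi=\pi/2$) and of $n^+$ (where $n\phi+\psi$ hits an odd multiple of $\pi/2$ exactly at the endpoint $x_\Delta^+$) conceptually transparent. The paper's route is more elementary and delivers the interlacing statements of \cref{thm:pl} as a by-product of the same induction.

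Two points in your plan need more care than you give them. First, the exact band count requires tracking $\psi=\arg\alpha_+$ at the ends: since $\alpha_+=\tfrac12-i\,h(z)/(2\sqrt{-\Delta(z)})$, one has $\psi\to\pm\pi/2$ at $x_\Delta^\pm$ according to the sign of $h$ there, and the dichotomy $\sgn h(x_\Delta^+)=\sgn(\Delta_\Delta-\Delta_g)$ is precisely what shifts the total variation of $n\phi+\psi$ by $\pi$ and moves one zero into or out of $J_3$; the degenerate case $n=n^+$ (a zero exactly at $x_\Delta^+\in J_3$) must be handled separately, as the paper does. You do not need monotonicity of $n\phi+\psi$ if, as you say, you only take lower bounds on each piece and sum them to the degree $n$ --- but then every piece needs its lower bound, including the conditional one on $J_3$. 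Second, your comparison of $W_n(x_g^+)=(x_g^+)^n$ with $W_n(0)$ on $J_4$ silently assumes $\sgn W_n(0)=(-1)^{n+1}$; the sequence $W_n(0)$ obeys $u_n=bu_{n-1}+du_{n-2}$ and its sign alternates only because $0<c\le c^-$ forces $b^2+4d>4ad>0$, so the characteristic roots are real and negative --- this needs to be said (the paper sidesteps it by comparing $W_n(x_g^+)$ with $W_n(y_{n-1})$ at the largest zero of $W_{n-1}$ instead). With those two items supplied, your argument goes through.
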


We illustrate \cref{thm:rr} as in \cref{fig1,fig11,fig2,fig3} respectively. 
In each figure, we thicken the line segment $J_\Delta$ and the points $x_g^\pm$.
The numbers of zeros are indicated above corresponding intervals.

\begin{figure}[htbp]
\begin{tikzpicture}
\draw[->] (0,0) -- (13,0) coordinate (x axis);

\draw[ultra thick] (1 cm,2pt) -- (1 cm,-2pt);
\draw[ultra thick] (3 cm,2pt) -- (3 cm,-2pt);
\draw[ultra thick] (5 cm,2pt) -- (5 cm,-2pt);
\draw[ultra thick] (11 cm,2pt) -- (11 cm,-2pt);
\fill (7,0) circle (3pt);
\fill (9,0) circle (3pt);
\draw[ultra thick] (1,0) -- node[below=5pt] {$J_1$} (3,0);
\draw[ultra thick] (3,0) -- node[below=5pt] {$J_2$} (5,0);
\draw (6,0) node[below=5pt] {$J_3$};
\draw (10,0) node[below=5pt] {$J_4$};

\draw (1,0) node[below=3pt] {$x_\Delta^-$};
\draw (3,0) node[below=7pt] {$x_A$};
\draw (5,0) node[below=3pt] {$x_\Delta^+$};
\draw (7,0) node[below=3pt] {$x_g^-$};
\draw (9,0) node[below=3pt] {$x_g^+$};
\draw (11,0) node[below=7pt] {$0$};

\draw (1,0) -- node[above=5pt] {$\bfl{\frac{n-1}{2}}$} (3,0);
\draw (3,0) -- node[above=5pt] {$\bfl{n\over 2}-1$} (5,0);
\draw (5,0) -- node[above=5pt] {$1$} (8,0);
\draw (9,0) -- node[above left = 5pt] {$1$} (11,0);
\end{tikzpicture}
\caption{Illustration of the root distribution in \cref{thm:rr} (i) 
when $\Delta_\Delta>\Delta_g$ and $n\ge n^+$.}\label{fig1}
\end{figure}
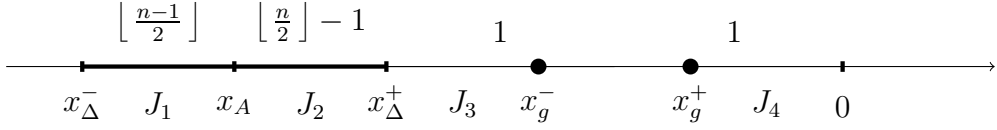

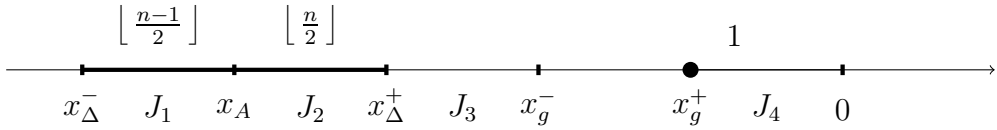
\begin{figure}[htbp]
\begin{tikzpicture}
\draw[->] (0,0) -- (13,0) coordinate (x axis);

\draw[ultra thick] (1 cm,2pt) -- (1 cm,-2pt);
\draw[ultra thick] (3 cm,2pt) -- (3 cm,-2pt);
\draw[ultra thick] (5 cm,2pt) -- (5 cm,-2pt);
\draw[ultra thick] (11 cm,2pt) -- (11 cm,-2pt);
\draw[ultra thick] (7 cm,2pt) -- (7 cm,-2pt);
\fill (9,0) circle (3pt);
\draw[ultra thick] (1,0) -- node[below=5pt] {$J_1$} (3,0);
\draw[ultra thick] (3,0) -- node[below=5pt] {$J_2$} (5,0);
\draw (6,0) node[below=5pt] {$J_3$};
\draw (10,0) node[below=5pt] {$J_4$};

\draw (1,0) node[below=3pt] {$x_\Delta^-$};
\draw (3,0) node[below=7pt] {$x_A$};
\draw (5,0) node[below=3pt] {$x_\Delta^+$};
\draw (7,0) node[below=3pt] {$x_g^-$};
\draw (9,0) node[below=3pt] {$x_g^+$};
\draw (11,0) node[below=7pt] {$0$};

\draw (1,0) -- node[above=5pt] {$\bfl{\frac{n-1}{2}}$} (3,0);
\draw (3,0) -- node[above=5pt] {$\bfl{\frac{n}{2}}$} (5,0);
\draw (9,0) -- node[above left = 5pt] {$1$} (11,0);
\end{tikzpicture}
\caption{Illustration of the root distribution in \cref{thm:rr} (i) 
when either $\Delta_\Delta>\Delta_g$ and $n<n^+$, or $\Delta_\Delta\le\Delta_g$.}\label{fig11}
\end{figure}

\bigskip

\begin{figure}[htbp]
\begin{tikzpicture}
\draw[->] (0,0) -- (14,0) coordinate (x axis);
\draw[ultra thick] (1 cm,2pt) -- (1 cm,-2pt);
\draw[ultra thick] (3 cm,2pt) -- (3 cm,-2pt);
\draw[ultra thick] (5 cm,2pt) -- (5 cm,-2pt);
\draw[ultra thick] (7 cm,2pt) -- (7 cm,-2pt);
\draw[ultra thick] (13 cm,2pt) -- (13 cm,-2pt);
\fill (9,0) circle (3pt);
\fill (11,0) circle (3pt);
\draw[ultra thick] (1,0) -- node[below=5pt] {$J_1$} (3,0);
\draw[ultra thick] (3,0) -- node[below=5pt] {$J_2$} (5,0);
\draw (7,0) -- node[below=5pt] {$J_3$} (9,0);
\draw (11,0) -- node[below=5pt] {$J_4$} (13,0);

\draw (1,0) node[below=3pt] {$x_\Delta^-$};
\draw (3,0) node[below=7pt] {$x_A$};
\draw (5,0) node[below=3pt] {$x_\Delta^+$};
\draw (7,0) node[below=7pt] {$x_B$};
\draw (9,0) node[below=3pt] {$x_g^-$};
\draw (11,0) node[below=3pt] {$x_g^+$};
\draw (13,0) node[below=7pt] {$0$};

\draw (1,0) -- node[above=5pt] {$n-1$} (3,0);
\draw (3,0) -- node[above=5pt] {$n-1$} (5,0);
\draw (7,0) -- node[above right = 5pt] {$1$} (9,0);
\draw (11,0) -- node[above left = 5pt] {$1$} (13,0);
\end{tikzpicture}
\caption{Illustration of the root distribution in \cref{thm:rr} (ii).}\label{fig2}
\end{figure}

\bigskip

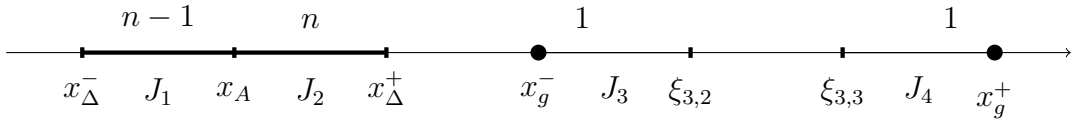
\begin{figure}[htbp]
\begin{tikzpicture}
\draw[->] (0,0) -- (14,0) coordinate (x axis);
\draw[ultra thick] (1 cm,2pt) -- (1 cm,-2pt);
\draw[ultra thick] (3 cm,2pt) -- (3 cm,-2pt);
\draw[ultra thick] (5 cm,2pt) -- (5 cm,-2pt);
\draw[ultra thick] (9 cm,2pt) -- (9 cm,-2pt);
\draw[ultra thick] (11 cm,2pt) -- (11 cm,-2pt);
\fill (7,0) circle (3pt);
\fill (13,0) circle (3pt);
\draw[ultra thick] (1,0) -- node[below=5pt] {$J_1$} (3,0);
\draw[ultra thick] (3,0) -- node[below=5pt] {$J_2$} (5,0);
\draw (7,0) -- node[below=5pt] {$J_3$} (9,0);
\draw(11,0) -- node[below=5pt] {$J_4$} (13,0);

\draw (1,0) node[below=3pt] {$x_\Delta^-$};
\draw (3,0) node[below=7pt] {$x_A$};
\draw (5,0) node[below=3pt] {$x_\Delta^+$};
\draw (7,0) node[below=3pt] {$x_g^-$};
\draw (9,0) node[below=5pt] {$\xi_{3,2}$};
\draw (11,0) node[below=5pt] {$\xi_{3,3}$};
\draw (13,0) node[below=7pt] {$x_g^+$};

\draw (1,0) -- node[above=5pt] {$n-1$} (3,0);
\draw (3,0) -- node[above=5pt] {$n$} (5,0);
\draw (7,0) -- node[above left = 5pt] {$1$} (9,0);
\draw (11,0) -- node[above right = 5pt] {$1$} (13,0);
\end{tikzpicture}
\caption{Illustration of the root distribution in \cref{thm:rr} (iii).}\label{fig3}
\end{figure}

Below are some examples as an application of \cref{thm:rr}.
\begin{eg}\label{eg1}
Let $c\in\R$. Let $W_n(z)$ be polynomials defined by the recurrence
\[
W_n(z)=(-3z-5)W_{n-1}(z)+(cz-1)W_{n-2}(z),
\]
with $W_0(z)=1$ and $W_1(z)=z$. It is routine to compute that
\begin{align*}
W_2(z)&=-3z^2+(c-5)z-1,\\[4pt]
W_3(z)&=9z^3+(30-2c)z^2+(27-5c)z+5,\\[4pt]
W_4(z)&=-27z^4-(3c-135)z^3+(c^2+20c-228)z^2+(23c-145)z-24,\\[4pt]
W_5(z)&=81z^5-(5c^2+45c-1350)z^4+(212-3c^2)z^3\\[3pt]
&\quad -(10c^2+140c-1545)z^2+(770-105c)z+115.
\end{align*}
From definition, one may compute that $c^-=1$ and $c^+=9$. 
\begin{itemize}
\itemsep 7pt
\item
If $c=0.8$, then \cref{thm:rr} (i) implies the real-rootedness of every polynomial $W_n(z)$. For example, $W_4(z)$ has the real-zeros 
\[
\xi_{4,1}\approx-2.396, \qquad
\xi_{4,2}\approx-1.446, \qquad
\xi_{4,3}\approx-0.704, \qquad
\xi_{4,4}\approx-0.364.
\]

\item
If $c=10$, then every polynomial $W_{2n}(z)$ is real-rooted by \cref{thm:rr} (ii). 
In this case, we can compute that
\[
x_g^-=0.25,\qquad 
\xi_{3,2}\approx0.251,\qquad 
\xi_{3,3}\approx0.955,\qquad
x_g^+=1. 
\]
Thus $x_g^-<\xi_{3,2}<\xi_{3,3}<x_g^+$.
By \cref{thm:rr} (iii), every polynomial $W_{2n-1}(z)$ is also real-rooted. 
For example, the zeros of $W_5(z)$ have the following approximations.
\[
\xi_{5,1}\approx-15.70, \quad
\xi_{5,2}\approx-1.962, \quad
\xi_{5,3}\approx-0.534, \quad
\xi_{5,4}\approx0.250, \quad
\xi_{5,5}\approx0.999.
\]   
\end{itemize}
\end{eg}
In fact, when $c^->0$, then the real-rootedness of the polynomial $W_3(z)$ 
implies that $\babs{R_3^{J_g}}=2$.  This can be seen from \cref{thm:W3:--+-}. 
When $c^-<0$, nevertheless, it is possible that $\babs{R_3^{J_g}}=2$. Below is such an example. 

\begin{eg}\label{eg2}
Let $c\in\R$. Let $W_n(z)$ be polynomials defined by the recurrence
\[
W_n(z)=(-0.3z-1)W_{n-1}(z)+(cz-60)W_{n-2}(z),
\]
with $W_0(z)=1$ and $W_1(z)=z$.
From definition, one may compute that 
$c^-\approx -16.6$ and $c^+\approx 18.6$. 
If $c=65$, then 
\[
x_g^-\approx 0.956,\qquad 
\xi_{3,2}\approx1.014,\qquad 
\xi_{3,3}\approx1.276,\qquad
x_g^+\approx48.274.
\]
Thus $x_g^-<\xi_{3,2}<\xi_{3,3}<x_g^+$.
By \cref{thm:rr}, every polynomial $W_{n}(z)$ is real-rooted. 
For example, the zeros of $W_5(z)$ have the following approximations.
\[
\xi_{5,1}\approx-1844.053, \quad
\xi_{5,2}\approx -1255.040, \quad
\xi_{5,3}\approx0.912, \quad
\xi_{5,4}\approx0.958, \quad
\xi_{5,5}\approx4.352.
\]
\end{eg}

Before ending this section,
we explain how one obtains the intervals $J_i$ in \cref{thm:rr}.
According to Beraha et al.'s result \cite{BKW78}, one may
calculate all limits of zeros of the polynomials. 
\emph{Non-isolated limits of zeros} depend only on \cref{rec11},
while those \emph{isolated} relies on the initial polynomials additionally,
that is $W_0(z)=1$ and $W_1(z)=z$ in the normalised case.
In fact, the non-isolated limits have bounds $x_\Delta^-$ and $x_\Delta^+$,
and the set of isolated limits consists of the points $x_g^\pm$.

Now we have the four cutting points $x_\Delta^\pm$ and $x_g^\pm$ in hand.
They work already well when the coefficients $A(z)$ and $B(z)$ have lower degrees,
see \cite{GMTW16-01,GMTW16-10} for when one of them is a constant. 
The coefficients in this paper have both degree one, 
and we adopt a fifth point $x_A$ to be an end of the desired intervals.
In fact, the point~$x_A$ is a non-isolated limit of zeros as long as the coefficient $A(z)$ is of degree one, 
and it is easy to determine the sign of $W_n(x_A)$ for each $n$ 
by \cref{rec11}. 
In general, the increasing order prompts us to introduce more 
cutting points to divide the real line.

\section{Proofs of the main result}\label[sec]{sec:pf}

The section contains a proof of \cref{thm:rr}, 
a result on the root distribution of the polynomial~$W_3(z)$ with a proof.

In \cref{--+-:lem:basic},
we list some signs that will be used in the sequel.

\begin{lem}\label{--+-:lem:basic}
Let $\{W_n(z)\}_{n\ge0}$ be a \noo\
with $a,b,d<0$ and $c>0$.
Then we have 
\[
x_\Delta^-<x_A<0<x_B
\rmand 
x_A<x_\Delta^+<x_B.
\]
It follows that $A(x_\Delta^-)>0>A(x_\Delta^+)$.
For $n\ge1$, we have $(-1)^{\cl{n/2}}W_n(x_A)>0$, $(-1)^nW_n(x_B)<0$, $W_n(x_\Delta^-)<0$, and the following. 
\begin{itemize}
\itemsep 5pt
\item[(i)]
If $c\le c^-$, then $x_\Delta^+\le x_g^-\le x_g^+<0$, $W_n(x_g^\pm)(-1)^n>0$, and
\[
(-1)^nW_n(x_\Delta^+)\begin{cases}
<0,&\text{if $\Delta_\Delta>\Delta_g$ and $n>n^+$;}\\[3pt]
=0,&\text{if $\Delta_\Delta>\Delta_g$ and $n=n^+$;}\\[3pt]
>0,&\text{otherwise}.
\end{cases}
\]
\item[(ii)]
If $c\ge c^+$, then $x_B<x_g^-$, $W_n(x_g^\pm)>0$, and
$(-1)^nW_n(x_\Delta^+)<0$ for $n\ge 2$. 
\end{itemize}
\end{lem}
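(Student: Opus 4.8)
The plan is to verify each asserted sign and inequality directly from the explicit formulas for the cutting points, using the sign assumptions $a,b,d<0$ and $c>0$ throughout. First I would establish the ordering $x_\Delta^-<x_A<0<x_B$ and $x_A<x_\Delta^+<x_B$. Since $x_A=-b/a$ with $a,b<0$ we get $x_A<0$, and $x_B=-d/c$ with $d<0,\,c>0$ gives $x_B>0$. From \cref{xd} the two roots $x_\Delta^\pm$ of $\Delta(z)=a^2z^2+(2ab+4c)z+(b^2+4d)$ satisfy $x_\Delta^-<x_\Delta^+$, and I would locate them relative to $x_A$ and $x_B$ by evaluating $\Delta$ at these points: a direct computation gives $\Delta(x_A)=\Delta(-b/a)=4(c\,x_A+d)$ up to the positive factor coming from completing the square, and more usefully $\Delta(x_A)=4B(x_A)=4(c(-b/a)+d)$, whose sign pins $x_A$ between (or outside) the $\Delta$-roots. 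Similarly $\Delta(x_B)=A(x_B)^2\ge0$ because $B(x_B)=cx_B+d=0$, so $x_B$ lies outside the open interval $(x_\Delta^-,x_\Delta^+)$; combined with $x_A<x_B$ and the sign data this forces $x_A<x_\Delta^+<x_B$. The claim $A(x_\Delta^-)>0>A(x_\Delta^+)$ then follows because $A(z)=az+b$ is decreasing (as $a<0$) and $A(x_A)=0$ with $x_\Delta^-<x_A<x_\Delta^+$.

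Next I would handle the three sign statements holding for all $n\ge1$. For $W_n(x_A)$ I would set up the auxiliary recurrence from \cref{lem:rec2}, namely $W_n=(A^2+2B)W_{n-2}-B^2W_{n-4}$, evaluated at $z=x_A$, where $A(x_A)=0$; this collapses the recurrence to $W_n(x_A)=2B(x_A)W_{n-2}(x_A)-B(x_A)^2W_{n-4}(x_A)$, and since $B(x_A)=cx_A+d<0$ the sign pattern $(-1)^{\cl{n/2}}W_n(x_A)>0$ should come out by a short induction on the parity classes, after checking the base cases $n=1,2,3,4$ from $W_0=1,\,W_1(x_A)=x_A<0$. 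For $W_n(x_B)$, where $B(x_B)=0$, \cref{rec11} degenerates to $W_n(x_B)=A(x_B)W_{n-1}(x_B)$ with $A(x_B)=ax_B+b<0$, so $W_n(x_B)=A(x_B)^n W_0=A(x_B)^n$, immediately giving $(-1)^nW_n(x_B)<0$ (the sign is negative because $A(x_B)<0$ forces $\operatorname{sgn}W_n(x_B)=(-1)^n$, hence $(-1)^nW_n(x_B)=|A(x_B)|^n>0$—I would double-check this orientation against the stated inequality). For $W_n(x_\Delta^-)<0$ I would use \cref{W:xd}: at a root of $\Delta$ the eigenvalues coincide and $W_n(x_\Delta^-)=\tfrac12\bg{A(x_\Delta^-)+nh(x_\Delta^-)}\bg{A(x_\Delta^-)/2}^{n-1}$; since $A(x_\Delta^-)>0$ the factor $(A(x_\Delta^-)/2)^{n-1}>0$, so the sign is governed by $A(x_\Delta^-)+nh(x_\Delta^-)$, and I would show this is negative for all $n\ge1$ by checking $h(x_\Delta^-)=(2-a)x_\Delta^-<0$ (using $2-a>0$ and $x_\Delta^-<x_A<0$) together with the sign of $A(x_\Delta^-)+h(x_\Delta^-)=2W_1(x_\Delta^-)-A(x_\Delta^-)+A(x_\Delta^-)$ type simplifications.

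For parts (i) and (ii) I would split on $c\le c^-$ versus $c\ge c^+$ using the definition $c^\pm=\pm2\sqrt{d(a-1)}-b$ and the discriminant $\Delta_g=(b+c)^2+4d(1-a)$, noting $\Delta_g\ge0$ exactly when $c\le c^-$ or $c\ge c^+$. In case (i) the ordering $x_\Delta^+\le x_g^-\le x_g^+<0$ would come from comparing \cref{xg} with \cref{xd} and checking $x_g^+<0$ via the sign of $g(0)=-d>0$ against the leading coefficient $1-a>0$; the sign $W_n(x_g^\pm)(-1)^n>0$ follows from \cref{W:xg}, which gives $W_n(x_g^\pm)=(x_g^\pm)^n$, so $(-1)^nW_n(x_g^\pm)=(-x_g^\pm)^n>0$ since $x_g^\pm<0$. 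The refined sign of $(-1)^nW_n(x_\Delta^+)$ is again read off \cref{W:xd}: with $A(x_\Delta^+)<0$ the factor $(A(x_\Delta^+)/2)^{n-1}$ contributes $(-1)^{n-1}$, so $(-1)^nW_n(x_\Delta^+)$ has the sign of $-\bg{A(x_\Delta^+)+nh(x_\Delta^+)}$, which changes sign exactly at $n=n^+=-A(x_\Delta^+)/h(x_\Delta^+)$; relating the condition $h(x_\Delta^+)\ne0$ and the sign of $h(x_\Delta^+)$ to the comparison $\Delta_\Delta>\Delta_g$ is what produces the three-way case split. In case (ii) I expect $x_B<x_g^-$ and $W_n(x_g^\pm)>0$ to follow because here $x_g^\pm>0$ (the roots move to the positive side when $c\ge c^+$), so \cref{W:xg} gives positive values outright, and $(-1)^nW_n(x_\Delta^+)<0$ for $n\ge2$ again from \cref{W:xd} after confirming the relevant sign of $A(x_\Delta^+)+nh(x_\Delta^+)$ in this regime. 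The main obstacle I anticipate is the case-analysis bookkeeping linking the comparison $\Delta_\Delta>\Delta_g$ to the sign of $h(x_\Delta^+)$ and thus to the crossing point $n^+$ in part (i): translating that algebraic inequality on discriminants into the correct monotonicity statement for $A(x_\Delta^+)+nh(x_\Delta^+)$ is where the computation is most delicate, and I would isolate it as a self-contained sub-lemma to keep the sign chase transparent.
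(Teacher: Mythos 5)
Your overall strategy---evaluating $W_n$ at the cutting points through \cref{W:xg}, \cref{W:xd} and the degenerate recurrences at $x_A$ and $x_B$, and locating $x_g^\pm$ by Vi\`eta---is the same as the paper's, but several individual steps as written either fail or omit the computation that carries the actual content. At $x_B$ you iterate the collapsed recurrence one step too far: \cref{rec11} holds only for $n\ge2$, so the iteration stops at $W_1$, giving $W_n(x_B)=A(x_B)^{n-1}W_1(x_B)=A(x_B)^{n-1}x_B$ with sign $(-1)^{n-1}$ (since $A(x_B)<0$ and $x_B>0$); your $W_n(x_B)=A(x_B)^nW_0$ yields the opposite sign and contradicts the lemma, which is exactly the discrepancy you flagged. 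Your route to the sign of $W_n(x_A)$ through \cref{lem:rec2} also does not close: that lemma assumes $A\ne0$ while $A(x_A)=0$, and even granting the identity $W_n=2BW_{n-2}-B^2W_{n-4}$ at $x_A$, the two terms on the right have opposite signs under the claimed sign pattern, so a sign-only induction is inconclusive; the correct move is the direct collapse of \cref{rec11} to $W_n(x_A)=B(x_A)W_{n-2}(x_A)$ with $B(x_A)<0$. (Also note $h(z)=(2-a)z-b$, so $h(x_\Delta^-)\ne(2-a)x_\Delta^-$; the needed negativity of $h(x_\Delta^-)$ follows instead from $h(x_A)=-2b/a<0$ and the monotonicity of $h$.)

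The more substantive gaps are in the order relations and in part (ii). The inequalities $x_\Delta^+\le x_g^-$ (case (i)) and $x_B<x_g^-$ (case (ii)) are the computational heart of the lemma and cannot be read off from signs alone: in case (ii) both $x_B$ and $x_g^-$ are positive, so ``the roots move to the positive side'' proves nothing; one must show $g(x_B)>0$ together with $x_B$ lying to the left of the vertex of $g$, which reduces to $c(b+c)+2d(1-a)>0$ and genuinely uses $c\ge c^+$ (this is precisely the chain of inequalities the paper establishes; the analogous identity for $x_g^--x_\Delta^+$ in case (i) is an explicit perfect-square quotient). Finally, to get $(-1)^nW_n(x_\Delta^+)<0$ for all $n\ge2$ in case (ii) it is not enough to confirm $h(x_\Delta^+)>0$: you must also show that the crossing point $n^+=-A(x_\Delta^+)/h(x_\Delta^+)$ satisfies $n^+<2$, equivalently $A(x_\Delta^+)+2h(x_\Delta^+)>0$, which is a separate and not entirely trivial inequality in $a,b,c,d$ that your sketch omits; without it the sign could still be positive for small $n\ge2$.
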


\begin{proof}
The order relations among the numbers $x_\Delta^\pm$, $x_A$ and $x_B$
can be derived directly from definition. 
From the linearity of the function $A(z)$ and that of $h(z)$, we find 
$A(x_\Delta^-)>0>A(x_\Delta^+)$ and $h(x_\Delta^-)<0$.
From \cref{rec11}, we can deduce that
\begin{align*}
W_n(x_A)&=\begin{cases}
\bg{c(x_A-x_B)}^{n/2},&\text{if $n$ is even},\\[9pt]
\bg{c(x_A-x_B)}^{(n-1)/2}x_A,&\text{if $n$ is odd},
\end{cases}\\[5pt]
(-1)^nW_n(x_B)&=(-1)^n\bg{a(x_B-x_A)}^{n-1}x_B<0. 
\end{align*}
On the other hand, we note that
\[
A(x_\Delta^-)+n\cdot h(x_\Delta^-)
=2x_\Delta^-+(n-1)h(x_\Delta^-)<0.
\]
By \cref{W:xd}, we infer that $W_n(x_\Delta^-)<0$. 
We proceed according to the range of $c$.

\begin{case}[$c\le c^-$]
In this case, the negativities of $x_g^\pm$ can be shown
by using Vi\`eta's formula to the polynomial $g(z)$. 
It follows from \cref{W:xg} that $W_n(x_g^\pm)(-1)^n>0$. 
By \cref{xg,xd}, it is routine to verify that
\[
x_g^--x_\Delta^+
=\frac{2(\sqrt{\Delta_\Delta}-\sqrt{\Delta_g})^2}
{a^2\sqrt{\Delta_g}+4(1-a)\sqrt{\Delta_\Delta}-a^2b+a^2c+2ab-4ac+4c}\ge 0.
\]
Equally routine can we check that
\[
h(x_\Delta^+)=\frac{2(\Delta_\Delta-\Delta_g)}{ab-ac+2c+(2-a)\sqrt{\Delta_\Delta}},
\]
which implies that the values $h(x_\Delta^+)$ and $\Delta_\Delta-\Delta_g$ have the same sign.
The sign of $W_n(x_\Delta^+)$ can be determined by \cref{W:xd}. 
\end{case}

\begin{case}[$c\ge c^+$] 
Vi\`eta's formula gives the positivities of~$x_g^\pm$. Then $W_n(x_g^\pm)>0$. 
In this case, we have
\begin{align*}
c(b+c)+2d(1-a)
&\ge2\sqrt{d(a-1)}(c-\sqrt{d(a-1)})\\
&\ge2\sqrt{d(a-1)}(\sqrt{d(a-1)}-b)>0.
\end{align*}
Thus
\[
x_g^--x_B=\frac{2x_B\bg{(a-1)d-bc}}
{c\sqrt{\Delta_g}+c(b+c)+2d(1-a)}>0.
\]
By using the condition $c\ge c^+$, 
it is routine to verify that
$\Delta_g<\Delta_\Delta$. 
Denote by $z_h=b/(a-2)$ the zero of $h(z)$. 
Then $\Delta(z_h)=4(\Delta_g-\Delta_\Delta)/(2-a)^2<0$.
It follows that $z_h<x_\Delta^+$, $h(x_\Delta^+)>0$ and thus $n^+>0$.
By \cref{W:xd}, we can deduce that 
\[
(-1)^nW_n(x_\Delta^+)<0
\iff
n>n^+.
\]
It remains to show $n^+<2$.
It is routine to check that $n^+=N^+/D^+$, where 
\[
N^+=a(\sqrt{\Delta_\Delta}-c)<0\rmand
D^+=(a-2)(\sqrt{\Delta_\Delta}-c)+ab.
\]
Thus $D^+<0$, and the desired inequality $n^+<2$ holds 
if and only if $N^+>2D^+$,
which simplifies to $R>L$, where
\begin{align*}
L=2ab-ac+4c>0
\rmand
R=(4-a)\sqrt{\Delta_\Delta}>0.
\end{align*}
The remaining proof follows immediately from the fact $R^2>L^2$.
\end{case}
\vskip -10pt
This completes the proof.
\end{proof}

\Cref{thm:pl} is the aforementioned result that the polynomials $W_n(z)$
have triple piecewise interlacing zeros. 

\begin{thm}[Piecewise interlacing zeros]\label{thm:pl}
Let $\{W_n(z)\}_n$ be a \noo\
with $a,b,d<0$ and $c>0$.
\begin{itemize}
\itemsep 5pt
\item[(i)]
If $0<c\le c^-$, then the pair $(R_{n+1},\,R_{n})$ piecewise strictly interlaces
on the union $\sqcup_{j=1}^4 J_j$,
and so do the pairs $(R_{2n+2},\,R_{2n})$ and $(R_{2n+1},\,R_{2n-1})$. 
As $n\to\infty$, the possible zero in $R_n^{J_3}$ increases, and the zero in $R_n^{J_4}$ decreases. 
\item[(ii)]
If $c\ge c^+$, then the pair $(R_{2n+2},\,R_{2n})$ piecewise strictly interlaces 
on the union $\sqcup_{j=1}^4 J_j$. 
As $n\to\infty$, the zero in $R_{2n}^{J_3}$ increases and the zero in $R_{2n}^{J_4}$ decreases. 
\item[(iii)]
If $c>c^+$ and $W_3(z)$ has zeros $\xi_{3,2}\le \xi_{3,3}$ lying in $J_g$,
then the pair $(R_{n+1},\,R_n)$ piecewise strictly interlaces 
on the union $\sqcup_{j=1}^4 J_j$, and so does the pair $(R_{2n+1},R_{2n-1})$. 
As $n\to\infty$, the zero in $R_{2n+1}^{J_3}$ decreases 
and the zero in~$R_{2n+1}^{J_4}$ increases. 
\end{itemize}
\end{thm}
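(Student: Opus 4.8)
The plan is to run a single induction on $n$ that simultaneously establishes the zero counts of \cref{thm:rr} and the strict interlacing of \cref{thm:pl}, with the intermediate value theorem as the only analytic tool; as explained in \cref{sec:piecewise} one may assume throughout that the $W_n$ have neither repeated nor common zeros, so every interlacing sought is automatically strict. After checking the finitely many base cases by hand from the explicit low-degree polynomials, the inductive engine is the familiar sign-alternation argument: at a zero $\xi$ of $W_n$ the recurrence \cref{rec11} collapses to $W_{n+1}(\xi)=B(\xi)\,W_{n-1}(\xi)$, so once $(R_n,R_{n-1})$ is known to interlace on an interval $J$ on which $B$ keeps a constant sign, $W_{n+1}$ alternates in sign at the zeros of $W_n$ in $J$ and the intermediate value theorem deposits a zero of $W_{n+1}$ between each consecutive pair. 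The endpoint signs tabulated in \cref{--+-:lem:basic} then decide whether an extra zero sits near an end of $J$, and the global identity $\sum_j\babs{R_{n+1}^{J_j}}=\deg W_{n+1}=n+1$ forces every inequality in the count to be an equality, ruling out stray complex zeros and upgrading the result to the strict piecewise interlacing asserted.

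The whole scheme hinges on keeping the sign of $B(z)=cz+d$ constant on each piece, and this is exactly what separates the three cases. In \cref{thm:pl}(i), where $0<c\le c^-$, every interval $J_1,\dots,J_4$ lies to the left of $x_B=-d/c>0$, so $B<0$ throughout and the consecutive-pair argument applies verbatim; the two subsequence statements follow from the same engine applied to the step-two recurrence of \cref{lem:rec2}, which at a zero $\xi$ of $W_{2n}$ reads $W_{2n+2}(\xi)=-B(\xi)^2\,W_{2n-2}(\xi)$ and whose coefficient $-B^2\le0$ is automatically sign-constant. In \cref{thm:pl}(iii) the hypothesis that the two middle zeros of $W_3$ fall inside $J_g$ lets us replace the seam by the narrower pieces $I_3=(x_g^-,\xi_{3,2})$ and $I_4=(\xi_{3,3},x_g^+)$, which together with $J_1,J_2$ avoid $x_B$ entirely (each lies either left of $x_\Delta^+$ or right of $x_g^-$), so $B$ is sign-constant on every piece and both the consecutive and the odd-subsequence interlacings go through.

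The exceptional case is \cref{thm:pl}(ii). When $c\ge c^+$ one has $x_\Delta^+<x_B<x_g^-$, so $x_B$ lands inside $J_3=[x_\Delta^+,x_g^-)$ and $B$ changes sign there; the consecutive-pair argument breaks, which is precisely why only the even subsequence is claimed. I would handle it solely through \cref{lem:rec2}: since the coefficient $-B^2$ never changes sign, the sign-alternation argument for $(R_{2n+2},R_{2n})$ is insensitive to $x_B$, and \cref{--+-:lem:basic}(ii) supplies the endpoint signs $W_{2n}(x_g^\pm)>0$ and $(-1)^nW_{2n}(x_\Delta^+)<0$ needed to pin the single zeros in $J_3$ and $J_4$.

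Two points need extra care, and I expect the second to be the main obstacle. First, the shared endpoint $x_\Delta^+$ of $J_2$ and $J_3$ carries the $n$-dependent sign of \cref{--+-:lem:basic}(i): for $\Delta_\Delta>\Delta_g$ the sign of $(-1)^nW_n(x_\Delta^+)$ flips as $n$ crosses $n^+$, and this is exactly what toggles $\babs{R_n^{J_3}}$ between $0$ and $1$; the induction must be arranged so that this lone boundary zero is consistently assigned to $J_3$ and never double-counted in $J_2$. Second, and harder, are the monotonicity assertions for the isolated zeros in $J_3$ and $J_4$ (and in $I_3,I_4$). The direction of drift I would read off by comparing $W_n$ and $W_{n+1}$ at the isolated limits $x_g^\pm$, where \cref{W:xg} gives the exact values $W_n(x_g^\pm)=(x_g^\pm)^n$, and then invoking strict interlacing to force the sole zero of $W_{n+1}$ in the piece to one side of that of $W_n$. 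Since this local comparison is by itself consistent with either monotone direction, the correct direction must be locked in by combining it with the base case and with the fact that these zeros converge to the isolated limits $x_g^\pm$; carrying out this coupling cleanly, while simultaneously controlling the sign of $W_n$ at the right-hand endpoint $0$ of $J_4$, which \cref{--+-:lem:basic} does not record and which does not follow from the constant-term recurrence alone, is the step I expect to demand the most delicate estimates.
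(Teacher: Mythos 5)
Your overall strategy is the paper's own: a joint induction on $n$ proving \cref{thm:rr} and \cref{thm:pl} together, driven by sign alternation at the zeros of earlier polynomials, the endpoint signs of \cref{--+-:lem:basic}, the intermediate value theorem, and the degree count $\sum_j\babs{R_n^{J_j}}=n$ to exclude non-real zeros; you also correctly isolate the role of $x_B$ (why case (ii) only yields the even subsequence, and why \cref{lem:rec2} with its sign-definite coefficient $-B^2$ is the right tool there). The one organisational difference is that the paper's primary engine for locating $R_n$ in case (i) is \cref{lem:rec2} evaluated at the zeros of $W_{n-2}$ --- the induction runs on the pairs $(R_n,R_{n-2})$, and the consecutive interlacing $(R_n,R_{n-1})$ is extracted afterwards from \cref{rec11} --- whereas you propose the reverse order; both are workable.

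The genuine gap is the one you flag at the end and do not close: the monotonicity of the isolated zeros in $J_3$ and $J_4$, and the right-hand end of $J_4$. The paper's resolution is that neither the endpoint $0$ nor a bare comparison at $x_g^\pm$ is ever used to trap the new zero in $J_4$. Writing $y_{n-1}$ for the unique zero of $W_{n-1}$ in $J_4$, one computes $W_{n-2}(x_g^+)\,W_n(y_{n-1})=B(y_{n-1})\cdot\bg{W_{n-2}(x_g^+)W_{n-2}(y_{n-1})}<0$, since $B(y_{n-1})<0$ and, by the inductive interlacing, $W_{n-2}$ does not vanish on $[x_g^+,\,y_{n-1}]$; as $W_n(x_g^+)=(x_g^+)^n$ and $W_{n-2}(x_g^+)=(x_g^+)^{n-2}$ share a sign, the intermediate value theorem places the new zero of $W_n$ inside $(x_g^+,\,y_{n-1})$. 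This single computation delivers existence, the strict interlacing on $J_4$, \emph{and} the asserted monotone decrease all at once, and it never requires $\sgn W_n(0)$ --- which, as you suspect, is not determined by $a,b,d<0$ alone. The same device (bounding the new isolated zero by the \emph{previous polynomial's} isolated zero rather than by the outer endpoint of the piece) settles the monotonicity claims in $J_3$, $I_3$ and $I_4$ in all three cases; for $\babs{R_n^{J_3}}$ in case (i) the paper additionally bootstraps from the sign flip of $(-1)^nW_n(x_\Delta^+)$ at $n=n^+$, exactly as you describe. So your plan is sound, but this bounding trick is the missing ingredient you would need to supply to make the last step go through.
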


We shall prove \cref{thm:rr,thm:pl} together. 
From \cref{--+-:lem:basic}, we see that $x_\Delta^-<x_A<x_g^-\le x_g^+<0$ 
when $c\le c^-$.

\noindent{\bf Proof of \cref{thm:rr} (i) and \cref{thm:pl} (i).}
First of all, we show the results in \cref{thm:rr} (i) and \cref{thm:pl} (i) except that for the cardinality 
$\babs{R_n^{J_3}}$.

From \cref{--+-:lem:basic}, we see that $B(x)<0$ for all negative $x$.
Before proceeding by induction, we check the desired results for $n\le 4$.

\noindent{\textbullet\mathversion{bold} $n=1$}.
The polynomial $W_1(z)=z$ has the unique zero $0\in J_4$.

\noindent{\textbullet\mathversion{bold} $n=2$}.
It is direct to compute that $W_2(0)=d<0$.
By the intermediate value theorem, 
with aid of \cref{--+-:lem:basic}, 
the polynomial $W_2(x)$ has zeros 
\[
\xi_{2,1}\in J_2\cup J_3
\rmand
\xi_{2,2}\in J_4.
\]
\noindent{\textbullet\mathversion{bold} $n=3$}.
Since $\xi_{2,j}<0$ for $j\in[2]$,
we infer that $B(\xi_{2,j})<0$.
Since $W_2(\xi_{2,j})=0$, we deduce from \cref{rec11} that 
\[
W_3(\xi_{2,j})=B(\xi_{2,j})W_1(\xi_{2,j})
=B(\xi_{2,j})\xi_{2,j}>0.
\]
For the same reason, the polynomial $W_3(x)$ has zeros 
\[
\xi_{3,1}\in J_1,\qquad
\xi_{3,2}\in(\xi_{2,1},\,x_g^-)\subset J_2\cup J_3,\rmand
\xi_{3,3}\in(x_g^+,\,\xi_{2,2})\subset J_4.
\]
\noindent\textbullet{\mathversion{bold} $n=4$}.
By \cref{lem:rec2} and the facts $W_2(\xi_{2,1})=0$ and $W_0(z)=1$, 
we infer that
\[
W_4(\xi_{2,1})=-B^2(\xi_{2,1})W_0(\xi_{2,1})=-B^2(\xi_{2,1})<0.
\]
Since $\xi_{3,j}<0$ for $j\in[3]$,
we obtain $B(\xi_{3,j})<0$.
By \cref{rec11} and together with the facts $W_3(\xi_{3,j})=0$ and 
\[
\sgn\bg{W_2(\xi_{3,j})}=\begin{cases}
-1,&\text{if $j=1$};\\[3pt]
1,&\text{if $j=2,3$},
\end{cases}
\]
we infer that 
\[
\sgn\bg{W_4(\xi_{3,j})}
=\sgn\bg{B(\xi_{3,j})W_2(\xi_{3,j})}
=\begin{cases}
1,&\text{if $j=1$};\\[3pt]
-1,&\text{if $j=2,3$}.
\end{cases}
\]
Again, we can deduce that $W_4(z)$ has a zero each of the following intervals:
\[
(x_\Delta^-,\,\xi_{3,1})\subset J_1,\quad
(x_A,\,\xi_{2,1})\subset J_2\cup J_3,\quad
(\xi_{3,2},\,x_g^-)\subset J_2\cup J_3,\quad\text{and}\quad
(x_g^+,\,\xi_{3,3})\subset J_4.
\]

Let {\mathversion{bold} $n\ge5$}. We proceed by induction. 
Suppose that the polynomial $W_{n-2}(z)$ has zeros $x_1<\cdots<x_{n-2}$,
and that the polynomial $W_{n-1}(z)$ has zeros $y_1<\cdots<y_{n-1}$.
The induction hypotheses on the interlacing zeros give us immediately
\[
\sgn\bg{W_{n-4}(x_j)}=\begin{cases}
(-1)^j,&\text{if $j\in\fl{(n-3)/2}$};\\
(-1)^{j+1},&\text{if $\fl{(n-1)/2}\le j\le n-3$}.
\end{cases}
\]
Note that $B(x_j)\ne0$.
By \cref{lem:rec2} and the fact $W_{n-2}(x_j)=0$, 
we infer that
\[
\sgn\bg{W_n(x_j)}
=\sgn\bg{-B^2(x_j)W_{n-4}(x_j)}
=\begin{cases}
(-1)^{j+1},&\text{if $j\in\fl{(n-3)/2}$};\\
(-1)^j,&\text{if $\fl{(n-1)/2}\le j\le n-3$}.
\end{cases}
\]
On the other hand,
by \cref{rec11}, the facts $W_{n-1}(y_{n-1})=0$ and
$B(y_{n-1})<0$, and the interlacing zeros by induction, we deduce that
\begin{align*}
W_{n-2}(x_g^+)W_n(y_{n-1})
&=W_{n-2}(x_g^+)\cdot \bg{B(y_{n-1})W_{n-2}(y_{n-1})}\\
&=B(y_{n-1})\cdot \bg{W_{n-2}(x_g^+)W_{n-2}(y_{n-1})}
<0.
\end{align*}
By the intermediate value theorem
and with the aid of \cref{--+-:lem:basic},
the polynomial $W_n(z)$ has zeros $z_j$ ($j\in[n]$) such that
\begin{multline*}
x_\Delta^-
<z_1<x_1<z_2<x_2
<\cdots
<x_{\fl{(n-3)/2}}<z_{\fl{(n-1)/2}}<x_A\\
<z_{\fl{(n+1)/2}}<x_{\fl{(n-1)/2}}
<z_{\fl{(n+3)/2}}<x_{\fl{(n+1)/2}}
<\cdots<x_{n-3}<z_{n-1}<x_g^-\\
<x_g^+<z_n<y_{n-1}<x_{n-2}<\cdots\le0.
\end{multline*}
Since $\deg W_n(z)=n$,
we conclude that the zeros $z_j$ ($j\in[n]$)
constitute the zero set of $W_n(z)$.
This proves the real-rootedness of $W_n(z)$,
verifies the cardinalities $\babs{R_n^{J_1}}$, $\babs{R_n^{J_2\cup J_3}}$, and $\babs{R_n^{J_4}}$, 
and establishes the desired interlacing properties 
of the sequences.
By induction, we have $W_{n-2}(y_j)(-1)^j>0$ for $j\in[n-2]$.
By \cref{rec11} and the facts $W_{n-1}(y_j)=0$ and $B(y_j)<0$, 
we derive
\[
W_n(y_j)(-1)^j=B(y_j)W_{n-2}(y_j)(-1)^j<0.
\]
For the same reason, we obtain that $W_n(z)$ and $W_{n-1}(z)$ have interlacing zeros
over each of the intervals $J_j$. 

Now we are going to determine the cardinality $\babs{R_n^{J_3}}$.

When $\Delta_\Delta\le\Delta_g$, from \cref{--+-:lem:basic}, 
we see that the numbers $W_n(x_\Delta^+)$ and $W_n(x_g^-)$ have the same sign.
This leads us to replace the number $x_g^-$ by $x_\Delta^+$ in the foregoing proof, 
and the interval $J_2\cup J_3$ becomes $J_2$.
The whole deduction remains true, 
which changes the conclusion $\babs{R_n^{J_2\cup J_3}}=\fl{n/2}$ to $\babs{R_n^{J_2}}=\fl{n/2}$.
Thus $R_n^{J_3}=\emptyset$ as desired.

When $\Delta_\Delta>\Delta_g$, 
the desired cardinality can be determined in a way 
highly similar to the proof of Theorem 4.5 in \cite{GMTW16-10}.
We provide a proof sketch for completeness.
In fact, we can deduce that $h(x_\Delta^+)> 0$, $n^+>0$, and
$W_n(x_\Delta^+)W_n(x_g^-)(n-n^+)\le0$,
with the equality holds if and only if $n=n^+$. 
The interlacing property over the interval $J_2\cup J_3$ implies
$\babs{R_{n+1}^{J_3}}-\babs{R_n^{J_3}}\le 1$.
By bootstrapping and with the aid of the intermediate value theorem, 
one may show that $R_n^{J_3}=\emptyset$
when $n<n^+$;
the number $x_\Delta^+$ is the unique zero of $W_n(z)$ in $J_3$
in case $n=n^+$;
and $\babs{R_n^{J_3}}=1$ for $n>n^+$.
This completes the proofs of \cref{thm:rr} (i) and \cref{thm:pl} (i).\qed

\medskip

When $c\ge c^+$, recall from \cref{--+-:lem:basic} that $x_\Delta^-<x_A<x_\Delta^+<x_B<x_g^-\le x_g^+$. 
The idea of piecewise interlacing still works for proving \cref{thm:rr} (ii) and \cref{thm:pl} (ii).

\noindent{\bf Proof of \cref{thm:rr} (ii) and \cref{thm:pl} (ii).}
By \cref{--+-:lem:basic},  we have $W_2(x_\Delta^+)<0$. 
By using the facts $W_2(z)=z^2-g(z)$,
it is elementary to verify the truth for $n=1$ that $W_2(z)$ 
has a zero in the interval $J_3$ and another zero in $J_4$. 
Let $n\ge2$. 
By induction, we can suppose that the polynomial $W_{2n-2}(z)$
has zeros $\xi_1$, $\ldots$, $\xi_{2n-2}$ such that
\[
x_\Delta^-
<\xi_1
<\cdots
<\xi_{n-2}
<x_A
<\xi_{n-1}
<\cdots
<\xi_{2n-4}
<x_\Delta^+
<\xi_{2n-3}
<x_g^-
\le x_g^+
<\xi_{2n-2}.
\]
Similar to the proof of \cref{thm:rr} (i), we can deduce that
\[
\sgn\bg{W_{2n}(\xi_j)}
=\begin{cases}
(-1)^{j+1},&\text{if $j\in[n-2]$};\\
(-1)^j,&\text{if $n-1\le j\le 2n-4$};\\
-1,&\text{if $j=2n-3,\,2n-4$}.
\end{cases}
\]
From \cref{--+-:lem:basic}, we see that
\[
W_{2n}(x_\Delta^-)<0,\quad
W_{2n}(x_A)(-1)^n>0,\quad
W_{2n}(x_\Delta^+)<0\rmand
W_{2n}(x_g^\pm)>0.
\]
The remaining of the proof follows from the intermediate value theorem. \qed

\medskip

\Cref{thm:rr} (iii) and \cref{thm:pl} (iii) can be shown along the same vein, and we omit their proofs.

Since $W_3(x_\Delta^-)<0<W_3(x_A)$,
we infer that the polynomial $W_3(z)$ has a zero in the interval $(x_\Delta^-,\,x_A)\subset\R^-$. It is direct to compute that $W_3(0)=bd\ne0$.
\Cref{thm:W3:--+-} explains the extent to which
the condition on the root distribution of~$W_3(z)$ in \cref{thm:rr} (iii) is acceptable.
Recall that $J_g=(x_g^-,\,x_g^+)$.

\begin{thm}\label{thm:W3:--+-}
Let $\{W_n(z)\}_n$ be a \noo\
with $a,b,d<0$ and $c\ge c^+$.
Then the polynomial $W_3(z)$ has a unique negative zero.
If it has a positive zero lying outside the interval $J_g$,
then it has two zeros lying in the interval $(-b/(a+1),\,x_0)$, where
$x_0$ is the positive zero of the quadratic polynomial $A^2(z)+B(z)$, and we have 
\[
x_0<x_\Delta^+,\qquad
a>-1,\qquad
c<\frac{b}{a+1}+\frac{a+1}{b}\cdot d\rmand
c^-<0.
\] 
\end{thm}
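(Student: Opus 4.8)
The plan is to rest the entire argument on the factorisation
\[
W_3(z)=z^3-\bg{z+A(z)}\,g(z)=z^3-(1-a^2)(z-z_*)(z-x_g^-)(z-x_g^+),\qquad z_*:=\frac{-b}{a+1},
\]
valid for $a\neq -1$, which one verifies by expanding $\bg{(a+1)z+b}\,g(z)$ and matching the coefficients of $W_3$ read off from \cref{rec11}; here $z+A(z)=(a+1)(z-z_*)$ and $g(z)=(1-a)(z-x_g^-)(z-x_g^+)$. The companion form $W_3(z)=z\bg{A(z)^2+B(z)}+A(z)B(z)=z\,A(z)^2+B(z)\bg{z+A(z)}$ will carry the sign estimates. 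Setting $P(z):=A(z)^2+B(z)$, these identities give at once $W_3(z_*)=z_*^3$, $W_3(x_g^\pm)=(x_g^\pm)^3$ (in agreement with \cref{W:xg}), and $W_3(x_0)=A(x_0)B(x_0)$, where $x_0$ is the positive zero of $P$ whenever it exists.

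First I would dispose of the unique negative zero. The product of the three zeros of $W_3$ equals $-bd/a^2<0$. If $W_3$ has a non-real zero, then it has a single real zero, which must be negative; and if all three zeros are real, a negative product forces an odd number of negative ones. So it suffices to rule out three negative zeros, which would require every coefficient of $W_3$ to be positive. This fails for $c\ge c^+$: the coefficient $2ab+c(a+1)$ of $z^2$ and the coefficient $b^2+ad+d+bc$ of $z$ are affine and decreasing in $c$, and they cannot both be positive at $c=c^+=-b+2\sqrt{d(a-1)}$, where the $z$-coefficient already equals $(a+1)d+2b\sqrt{d(a-1)}<0$ when $a\ge -1$, while for $a<-1$ the two ranges of $c$ making these coefficients positive are disjoint. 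Hence some coefficient is non-positive for every $c\ge c^+$, excluding three negative zeros.

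Now assume $W_3$ has a positive zero outside $J_g$. Its remaining two zeros then have negative product, hence are real of opposite sign, so $W_3$ has exactly two positive zeros $\xi_-\le\xi_+$ and one negative zero. For $z>x_g^+$ the subtracted cubic $(1-a^2)(z-z_*)(z-x_g^-)(z-x_g^+)$ is either negative or at most $(1-a^2)z^3<z^3$, so $W_3(z)>0$; thus no zero exceeds $x_g^+$, and since $W_3(x_g^\pm)>0$ the two zeros cannot straddle $J_g$, so the hypothesis puts both $\xi_\pm$ below $x_g^-$. For such a zero $g(\xi)>0$, whence $\xi^3=\bg{(a+1)\xi+b}\,g(\xi)$ forces $(a+1)\xi+b>0$; as $\xi>0$ this is possible only if $a>-1$, and then $\xi>z_*>0$. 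So both zeros lie in $(z_*,x_g^-)$. The hard part will be to push them below $x_B$, and I would argue by contradiction: if $z_*\ge x_B$, then $W_3>0$ on $(0,z_*)$ because the triple product is negative there, while on $(z_*,x_g^-)$ we have $z\ge x_B$ and $z+A(z)>0$, so $W_3=z\,A(z)^2+B(z)\bg{z+A(z)}>0$; thus $W_3>0$ on all of $(0,x_g^-)$, contradicting the zeros found there. Hence $z_*<x_B$, and the companion identity likewise gives $W_3>0$ on $[x_B,x_g^-)$, so $\xi_-,\xi_+\in(z_*,x_B)$.

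Reading off the conclusions is then routine. From $\xi_\pm<x_B$ we get $B(\xi_\pm)<0$, hence $P(\xi_\pm)=-A(\xi_\pm)B(\xi_\pm)/\xi_\pm<0$; since the zeros of $P$ sum to $-(2ab+c)/a^2<0$ while its larger zero $x_0$ exceeds $\xi_+>0$, its smaller zero is negative and $P(0)=b^2+d<0$, so $x_0\in(0,x_B)$ exists with $\xi_\pm<x_0$; this places both zeros in $(z_*,x_0)$. Because $x_\Delta^-<0<x_0$ and $\Delta(x_0)=P(x_0)+3B(x_0)=3B(x_0)<0$, we obtain $x_0<x_\Delta^+$. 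Since $z_*\in(0,x_0)$ we have $P(z_*)<0$, which a direct computation rewrites as $c<\frac{b}{a+1}+\frac{(a+1)d}{b}$. Finally $b^2<-d$ together with $4(1-a)>1$ gives $b^2<4(1-a)(-d)=4(a-1)d$, i.e. $c^-<0$. The single delicate point is the localisation $z_*<x_B$; once it is in hand, the two displayed identities force everything else.
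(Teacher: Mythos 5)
Your argument has one genuine gap, and it sits in the proof that $W_3(z)$ has a unique negative zero, specifically in the case $a<-1$. Ruling out three negative zeros does reduce, as you say, to showing that $U=2ab+c(a+1)$ and $V=b^2+(a+1)d+bc$ cannot both be positive when $c\ge c^+$, and your treatment of $a\ge-1$ is fine: $V$ is decreasing in $c$ and $V(c^+)=(a+1)d+2b\sqrt{d(a-1)}<0$. But for $a<-1$ the assertion that ``the two ranges of $c$ making these coefficients positive are disjoint'' is false: since $a+1<0$ and $b<0$, both $U$ and $V$ are then decreasing affine functions of $c$, so both ranges are left half-lines $(-\infty,T_U)$ and $(-\infty,T_V)$, which always intersect (both coefficients are positive for $c$ sufficiently negative). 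What you actually need is that $\min(T_U,T_V)\le c^+$, equivalently that $U(c^+)\le0$ or $V(c^+)\le0$, and this requires a computation rather than a monotonicity remark. One way (the one we use, phrased as a contradiction valid for all $a$): from $V>0$ and $c\ge c^+$ one extracts $\abs{a+1}\sqrt{-d}>2\abs{b}\sqrt{1-a}$ (which already forces $a+1<0$), from $U>0$, $c\ge c^+$ and $a+1<0$ one extracts $\abs{b}\sqrt{1-a}>2\abs{a+1}\sqrt{-d}$, and multiplying the two gives $1>4$. Without some such step the uniqueness of the negative zero is unproven for $a<-1$. A second, much smaller omission: you use $B(x_0)<0$, i.e.\ $x_0<x_B$, to get $\Delta(x_0)=3B(x_0)<0$, but only assert it; it does follow, since $P(x_B)=A^2(x_B)>0$ and $P>0$ to the right of its larger zero $x_0$, so spell that out.

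The rest of the proof is correct and in places takes a genuinely different route from ours. The identity $W_3(z)=z^3-\bg{z+A(z)}g(z)$ is valid (it matches $U$ and $V$ above), and it lets you read off $a>-1$ and $\xi>-b/(a+1)$ directly from $\xi^3=\bg{(a+1)\xi+b}g(\xi)$ with $g(\xi)>0$, whereas we first establish $B(\xi)<0$ via $g(\xi)=-B^3(\xi)/F^2(\xi)$ and then use $A^2(\xi)\xi+B(\xi)\bg{A(\xi)+\xi}=0$. Likewise your localisation $\xi<x_B$, obtained by noting that $W_3=zA^2(z)+B(z)\bg{z+A(z)}$ is positive on $[x_B,x_g^-)$ once $-b/(a+1)<x_B$ is forced, replaces our sign argument through $B(\xi)$; both work, and yours avoids the rational-function computation at the cost of the extra contradiction step for $-b/(a+1)<x_B$. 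Do record that $c\ge c^+$ gives $\Delta_g\ge0$, so that $x_g^\pm$ are real and the factored form $g(z)=(1-a)(z-x_g^-)(z-x_g^+)$ makes sense. The closing derivations of $x_0<x_\Delta^+$, of $c<\frac{b}{a+1}+\frac{a+1}{b}d$ from $P(-b/(a+1))<0$, and of $c^-<0$ from $b^2<-d<4(1-a)(-d)$ are all correct and slightly cleaner than our versions.
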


\medskip
When the polynomial $W_3(z)$ is real-rooted, 
its root distribution is illustrated in \cref{fig4,fig5}. 
\medskip
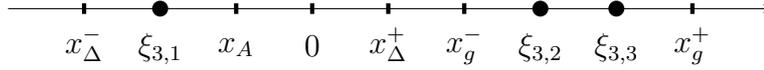
\begin{figure}[htbp]
\begin{tikzpicture}
\draw[->] (0,0) -- (10,0) coordinate (x axis);
\foreach \x in {1,2,...,9} \draw[ultra thick] (\x cm,2pt) -- (\x cm,-2pt);
\fill (2,0) circle (3pt);
\fill (7,0) circle (3pt);
\fill (8,0) circle (3pt);

\draw (1,0) node[below=3pt] {$x_\Delta^-$};
\draw (2,0) node[below=5pt] {$\xi_{3,1}$};
\draw (3,0) node[below=7pt] {$x_A$};
\draw (4,0) node[below=6pt] {$0$};
\draw (5,0) node[below=3pt] {$x_\Delta^+$};
\draw (6,0) node[below=3pt] {$x_g^-$};
\draw (7,0) node[below=5pt] {$\xi_{3,2}$};
\draw (8,0) node[below=5pt] {$\xi_{3,3}$};
\draw (9,0) node[below=3pt] {$x_g^+$};

\end{tikzpicture}
\caption{Illustration for the root distribution of $W_3(z)$ 
when two of its zeros lie in the interval~$J_g$.}\label{fig4}
\end{figure}

\bigskip

\begin{figure}[htbp]
\begin{tikzpicture}
\draw[->] (0,0) -- (12,0) coordinate (x axis);
\foreach \x in {1,2,...,11} \draw[ultra thick] (\x cm,2pt) -- (\x cm,-2pt);
\fill (2,0) circle (3pt);
\fill (6,0) circle (3pt);
\fill (7,0) circle (3pt);

\draw (1,0) node[below=3pt] {$x_\Delta^-$};
\draw (2,0) node[below=5pt] {$\xi_{3,1}$};
\draw (3,0) node[below=7pt] {$x_A$};
\draw (4,0) node[below=6pt] {$0$};
\draw (5,0) node[below=3pt] {$-\frac{b}{a+1}$};
\draw (6,0) node[below=5pt] {$\xi_{3,2}$};
\draw (7,0) node[below=5pt] {$\xi_{3,3}$};
\draw (8,0) node[below=7pt] {$x_0$};
\draw (9,0) node[below=3pt] {$x_\Delta^+$};
\draw (10,0) node[below=3pt] {$x_g^-$};
\draw (11,0) node[below=3pt] {$x_g^+$};

\end{tikzpicture}
\caption{Illustration for the root distribution of $W_3(z)$, 
when $W_3(z)$ has a positive zero lying outside the interval~$J_g$.
It turns out that it has two positive zeros lying in the interval $(-b/(a+1),\,x_0)$.}\label{fig5}
\end{figure}
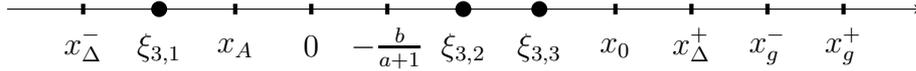

\begin{proof}
By \cref{rec11}, 
one may compute that $W_3(z)=a^2z^3+Uz^2+Vz+bd$, where
$U=2ab+ac+c$ and $V=ad+bc+b^2+d$.
Suppose that $c\ge c^+$. 
Let $\{x_1,x_2,x_3\}$ be the zero set of $W_3(z)$ such that $x_1\in(x_\Delta^-,\,x_A)$.
By Vi\`eta's formula, we have 
\begin{align}
x_1+x_2+x_3&=-U/a^2,\label{e1:W3}\\
x_1x_2+x_1x_3+x_2x_3&=V/a^2,\label{e2:W3}\\
x_1x_2x_3&=-bd/a^2>0.\label{e3:W3}
\end{align}
From \cref{e3:W3} and the fact $x_1<0$,
we infer that the zeros $x_2$ and $x_3$ have the same sign. 
We shall show that both $x_2$ and $x_3$ are positive.

Assume, to the contrary, that $x_2,x_3<0$.
Then \cref{e1:W3} implies $U>0$, and \cref{e2:W3} implies $V>0$. 
On one hand, since
\[
ad+b^2+d=V-bc>-bc\ge-bc^+=-b(2\sqrt{d(a-1)}-b),
\]
that is, 
\begin{equation}\label[ineq]{pf11}
(a+1)\sqrt{-d}<2b\sqrt{1-a},
\end{equation}
we infer that $a+1<0$. 
On the other hand,
\[
2ab=U-(a+1)c>-(a+1)c\ge-(a+1)c^+=-(a+1)(2\sqrt{d(a-1)}-b),
\]
that is, $b\sqrt{1-a}<2(a+1)\sqrt{-d}$.
Combining it with \cref{pf11}, we find $a+1>0$, a contradiction.
Therefore, both the zeros $x_2$ and $x_3$ are positive.
This proves the uniqueness of the negative zero $x_1$.

Suppose tha $\{x_2,\,x_3\}\backslash J_g\ne\emptyset$.
From \cref{--+-:lem:basic}, we see that $W_n(x_g^\pm)>0$.
By the intermediate value theorem, the number of zeros of $W_3(z)$ in $J_g$ is even.
Thus $\{x_2,\,x_3\}\cap J_g=\emptyset$.
Let $\xi\in\{x_2,x_3\}$. Then $\xi>0$.
Since the polynomial $g(z)$ is quadratic with leading coefficient $1-a>0$,
and since $\xi\not\in J_g$, we infer that $g(\xi)>0$.

Let $F(z)=A^2(z)+B(z)$. By \cref{rec11}, one may deduce that
\begin{align}
0&=W_3(\xi)=A(\xi)W_2(\xi)+B(\xi)\cdot\xi
=A(\xi)\cdot \bg{A(\xi)\cdot\xi+B(\xi)}+B(\xi)\cdot\xi\notag\\
&=A^2(\xi)\cdot \xi+B(\xi)\cdot\bg{A(\xi)+\xi}\label{x:W3}\\
&=F(\xi)\cdot\xi+A(\xi)B(\xi).\label{pf:F}
\end{align}
It follows that $F(\xi)\ne0$ and $\xi=-A(\xi)B(\xi)/F(\xi)$.
Thus $g(\xi)=-B^3(\xi)/F^2(\xi)$.
Since $g(\xi)>0$, we deduce that $B(\xi)<0$. 
By \cref{x:W3}, we infer that $A(\xi)+\xi>0$, which implies that $a>-1$ and $\xi>-b/(a+1)$.

Since $x_A<0<\xi$, we have $A(\xi)<0$. By \cref{pf:F}, we infer that $F(\xi)<0$.
Note that $F(z)=a^2z^2+(2ab+c)z+(b^2+d)$ is quadratic with positive leading coefficient.
The fact $F(\xi)<0$ implies that $F(z)$ is real-rooted and it has a zero larger than $\xi>0$.  
Applying Vi\'eta's formula on $F(z)$, we obtain that $F(z)$ has a negative zero.
Thus $F(z)$ has a unique positive zero, say, $x_0$, and $x_0>\xi$.
From \cref{--+-:lem:basic}, we see that $x_\Delta^+<x_B$. Thus
\[
F(x_\Delta^+)=\Delta(x_\Delta^+)-3B(x_\Delta^+)=-3B(x_\Delta^+)>0.
\]
Thus $x_0<x_\Delta^+$. 
Since the polynomial $F(z)$ is increasing in the interval $(0,+\infty)$ and $F(\xi)<0$, 
we infer that $F(-b/(a+1))<0$, that is,
\[
c<\frac{b}{a+1}+\frac{a+1}{b}\cdot d,\qquad\text{or equivalently,}\qquad d<\frac{bc(a+1)-b^2}{(a+1)^2}.
\]
It follows that $d<-b^2$, and $c^-<0$ from definition.
This completes the proof.
\end{proof}

Continuing \cref{eg2}, we illustrate \cref{thm:W3:--+-} by presenting
that $W_3(z)$ may have two positive zeros 
in the interval $(-b/(a+1),\,x_0)$ and that $W_3(z)$ may have non-real zeros.

\begin{eg}\label{eg3}
Let $c\in\R$. Let $W_n(z)$ be polynomials defined by the recurrence
\[
W_n(z)=(-0.3z-1)W_{n-1}(z)+(cz-60)W_{n-2}(z),
\]
with $W_0(z)=1$ and $W_1(z)=z$.
Then $c^-\approx -16.6$ and $c^+\approx 18.6$. 
\begin{itemize}
\itemsep 4pt
\item
If $c=20$, then $c>c^+$, and every polynomial $W_{2n}(z)$ is real-rooted by \cref{thm:rr} (ii). For example, $W_4(z)$ has the real zeros 
\[
\xi_{4,1}\approx-423.39, \qquad
\xi_{4,2}\approx2.89, \qquad
\xi_{4,3}\approx3.67, \qquad
\xi_{4,4}\approx29.03.
\]
Moreover, we can compute that
\[
-b/(a+1)\approx 1.42,\qquad
\xi_{3,2}\approx 1.61,\qquad 
\xi_{3,3}\approx 2.48,\qquad
x_0\approx2.82.
\]
Thus $-b/(a+1)<\xi_{3,2}<\xi_{3,3}<x_0$.
In this case, the polynomial $W_5(z)$ has real zeros approximately
$2.93$, $-47.44$, and $-574.73$,
and non-real zeros approximately $2.95\pm1.52\,\mathrm{i}$. 
\item
If $c=30$, then $W_3(z)$ has non-real zeros approximately $1.63\pm0.30\,\mathrm{i}$.
\end{itemize}

\end{eg}

\section{Concluding remarks}\label{sec:conj}

With a bit more work on the root distribution of the polynomial $W_3(z)$ in \cref{thm:W3:--+-},
one may obtain \cref{thm:c:large}. 

\begin{thm}\label{thm:c:large}
Let $\{W_n(z)\}_n$ be a \noo\ with $a,b,d<0$ and $c\in\R$.
Then every polynomial $W_n(z)$ is real-rooted if the number $c$ is sufficiently large.
\end{thm}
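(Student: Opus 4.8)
The plan is to reduce everything to the behaviour of the single polynomial $W_3(z)$ as $c\to+\infty$, and then invoke the two halves of \cref{thm:rr}. Since $c^+$ depends only on $a,b,d$, for $c$ large we have $c>c^+$, so \cref{thm:rr}~(ii) already yields the real-rootedness of every $W_{2n}(z)$. To capture the odd-indexed polynomials through \cref{thm:rr}~(iii), it suffices to prove that, for $c$ sufficiently large, $W_3(z)$ is real-rooted and its two non-negative zeros $\xi_{3,2}\le\xi_{3,3}$ lie in the open interval $J_g=(x_g^-,\,x_g^+)$. Combined with the trivial cases $W_0=1$ and $W_1=z$, this would give the assertion for all $W_n(z)$.

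First I would pin down the asymptotics of the cutting points. Writing $g(z)=(1-a)z^2-(b+c)z-d$ and applying Vi\`eta's formula, the zeros $x_g^\pm$ (which are positive when $c\ge c^+$ by \cref{--+-:lem:basic}~(ii)) satisfy $x_g^-x_g^+=-d/(1-a)$, a fixed positive number, while $x_g^-+x_g^+=(b+c)/(1-a)\to+\infty$. Hence $x_g^-\to0^+$ and $x_g^+\to+\infty$ as $c\to+\infty$. At the same time $x_\Delta^-\to-\infty$ and $x_A=-b/a$ stays fixed and negative, and \cref{--+-:lem:basic} furnishes the signs $W_3(x_\Delta^-)<0<W_3(x_A)$ together with $W_3(x_g^\pm)=(x_g^\pm)^3>0$; the first pair already locates the unique negative zero $x_1\in(x_\Delta^-,\,x_A)$.

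The heart of the argument is to exhibit an interior point of $J_g$ at which $W_3$ is negative. From \cref{rec11} one has $W_3(z)=a^2z^3+\bg{2ab+(a+1)c}z^2+\bg{ad+bc+b^2+d}z+bd$, so the coefficient of $c$ in $W_3(\zeta)$ equals $\zeta\bg{(a+1)\zeta+b}$. Fixing a small $\zeta>0$ with $(a+1)\zeta+b<0$ — which is possible for every sign of $a+1$ because $b<0$ — this coefficient is negative, whence $W_3(\zeta)\to-\infty$ and in particular $W_3(\zeta)<0$ for $c$ large. For such $c$ we also have $x_g^-<\zeta<x_g^+$ by the limits above. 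Since $W_3$ is positive at both endpoints $x_g^\pm$ and negative at $\zeta$, the intermediate value theorem produces zeros $\xi_{3,2}\in(x_g^-,\,\zeta)$ and $\xi_{3,3}\in(\zeta,\,x_g^+)$, both inside $J_g$. Together with $x_1$ this accounts for all three zeros of the cubic, so $W_3(z)$ is real-rooted with exactly its two non-negative zeros in $J_g$, as required. Then \cref{thm:rr}~(iii) (whose hypothesis $c>c^+$ also holds) delivers the real-rootedness of every $W_{2n+1}(z)$, completing the proof.

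The steps are routine once set up; the only genuinely delicate point — the ``bit more work on $W_3$'' — is choosing $\zeta$ so that it sits inside $J_g$ uniformly for large $c$ while forcing $W_3(\zeta)<0$. The small-$\zeta$ choice handles the three sign regimes of $a+1$ at once and meshes with $x_g^-\to0^+$; by contrast, \cref{thm:W3:--+-} only rules out positive zeros of $W_3$ outside $J_g$ and says nothing about complex zeros (which, as \cref{eg3} shows, do occur for moderate $c$), so the direct intermediate-value argument, rather than the contrapositive of \cref{thm:W3:--+-}, is what is needed.
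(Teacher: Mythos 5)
Your proof is correct, but it takes a genuinely different route from the paper's. The paper proves real-rootedness of $W_3(z)$ by observing that its discriminant is a quartic in $c$ with positive leading coefficient (hence positive for large $c$), and then locates the two positive zeros inside $J_g$ by invoking the contrapositive of \cref{thm:W3:--+-}: a positive zero outside $J_g$ would force $c<\frac{b}{a+1}+\frac{a+1}{b}\,d$, which fails once $c$ exceeds a suitable $c^*$. You instead run a single intermediate-value argument: you fix a small $\zeta>0$ with $(a+1)\zeta+b<0$ (possible since $b<0$), note that the coefficient of $c$ in $W_3(\zeta)$ is $\zeta\bigl((a+1)\zeta+b\bigr)<0$ so that $W_3(\zeta)\to-\infty$, and use Vi\`eta's formula on $g$ to see $x_g^-\to0^+$ and $x_g^+\to+\infty$, which places $\zeta$ inside $J_g$ where $W_3(x_g^\pm)=(x_g^\pm)^3>0$; together with the negative zero supplied by \cref{--+-:lem:basic} this yields all three real zeros at once. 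Your version is more self-contained and elementary --- it needs neither the (unverified in the paper) discriminant computation nor \cref{thm:W3:--+-}, and it avoids the edge case $a=-1$ where the paper's threshold $\frac{b}{a+1}+\frac{a+1}{b}\,d$ is undefined --- at the cost of a short asymptotic analysis of the cutting points $x_g^\pm$; the paper's version is shorter given that \cref{thm:W3:--+-} has already been established. Both correctly reduce the theorem to the hypotheses of \cref{thm:rr}~(ii) and~(iii).
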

\begin{proof}
By routine computation, one sees that the discriminant of the polynomial~$W_3(z)$ 
is a quartic polynomial in $c$ with a positive leading coefficient. 
Thus there exists $N$ such that the discriminant is positive as if $c>N$. 
Let 
\[
c^*=\max\bgg{c^+,\ N,\ \frac{b}{a+1}+\frac{a+1}{b}\cdot d}. 
\]
Suppose that $c>c^*$. Then $W_3(z)$ is real-rooted, because of the positivity of its discriminant. 
By \cref{thm:W3:--+-}, $W_3(z)$ has two positive zeros in the interval $J_g$. 
By \cref{thm:rr} (ii) and (iii), every polynomial $W_n(z)$ is real-rooted. 
\end{proof}

We end this paper by proposing \cref{conj:only1complex}, 
which is partially supported by \cref{thm:c:large}.

\begin{conj}\label{conj:only1complex}
Let $\{W_n(z)\}_n$ be a \noo\ with $a,b,d<0$ and $c>0$. 
Then every polynomial $W_n(z)$ has at most two non-real zeros. 
\end{conj}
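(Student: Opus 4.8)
The plan is to prove the sharper quantitative statement that $W_n(z)$ has at least $n-2$ real zeros for every $n$. Since $W_n(z)$ has real coefficients, its non-real zeros occur in conjugate pairs, so $n-2$ real zeros among the $n$ zeros leaves room for at most one such pair, which is exactly the asserted bound. The regimes $c\le c^-$ and, for even indices, $c\ge c^+$ are already covered by \cref{thm:rr}, where every relevant polynomial is real-rooted; the genuinely open part is the band $c^-<c<c^+$, together with the odd-indexed polynomials when the hypothesis on $W_3$ in \cref{thm:rr} (iii) fails (as in \cref{eg3}). Throughout the band $c^-<c<c^+$ we have $\Delta_g<0$, so the isolated limits of zeros $x_g^\pm$ form a complex-conjugate pair, and the heuristic of \cref{sec:piecewise} predicts that only the two zeros attracted to $x_g^\pm$ can leave the real line.

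The first step is to isolate the \emph{stable} part of the spectrum, namely the real zeros lying on the segment $(x_\Delta^-,\,x_\Delta^+)$ that carries all the non-isolated limits of zeros. On this segment $\Delta(z)<0$, so the eigenvalues $\lambda_\pm(z)$ of \cref{lem:00} form a complex-conjugate pair of common modulus $\rho(z)=\sqrt{-B(z)}$ (here $B(z)<0$ automatically, since $\lambda_+(z)\lambda_-(z)=-B(z)=\abs{\lambda_+(z)}^2$), while $\alpha_-(z)=\overline{\alpha_+(z)}$. Writing $\lambda_+(z)=\rho(z)e^{i\phi(z)}$ and $\alpha_+(z)=r(z)e^{i\psi(z)}$, \cref{lem:00} yields the real expression
\[
W_n(z)=2\,r(z)\,\rho(z)^n\cos\bg{n\phi(z)+\psi(z)},
\qquad z\in(x_\Delta^-,\,x_\Delta^+).
\]
As $z$ runs from $x_\Delta^-$ to $x_\Delta^+$ the angle $\phi(z)$ sweeps from $0$ to $\pi$ (because $A(x_\Delta^-)>0>A(x_\Delta^+)$), so $n\phi(z)+\psi(z)$ increases by about $n\pi$ and the cosine changes sign roughly $n$ times. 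This recovers, for every $c>0$, the bulk of the real zeros in $J_1\cup J_2=(x_\Delta^-,\,x_A)\cup(x_A,\,x_\Delta^+)$, independently of whether $x_g^\pm$ are real.

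To convert this heuristic into an exact count I would rerun the piecewise-interlacing induction behind \cref{thm:pl}, but restricted to $J_1\cup J_2$, using \cref{lem:rec2} to propagate the sign of $W_n$ at the real zeros of $W_{n-2}$ exactly as in the proof of \cref{thm:rr} (i). The crucial observation is that over $J_1\cup J_2$ this sign bookkeeping, together with $W_n(x_\Delta^-)<0$ and $(-1)^{\cl{n/2}}W_n(x_A)>0$ from \cref{--+-:lem:basic}, never invokes the reality of $x_g^\pm$; the complex zeros simply do not enter the real sign evaluations. What the induction still needs is a right-hand anchor in place of the now-unavailable sign $W_n(x_g^\pm)(-1)^n>0$. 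I would therefore prove a replacement lemma, valid throughout $c^-<c<c^+$, determining the sign of $W_n(x_\Delta^+)$ directly from \cref{W:xd} (this amounts to locating $n^+$ and the sign of $h(x_\Delta^+)$ in the band), so that the induction pins down at least $n-2$ interlacing real zeros in $J_1\cup J_2$.

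The main obstacle is the finite-$n$ control of the remaining (at most) two zeros. The Beraha--Kahane--Weiss theory guarantees that each isolated limit attracts at most one zero \emph{in the limit} $n\to\infty$, whereas the conjecture is a claim for every fixed $n$; the delicate points are to show that no bulk zero ever leaks out of $(x_\Delta^-,\,x_\Delta^+)$ across the whole band, and that the escaping zeros form a single conjugate pair rather than causing an additional loss. I expect this to require a perturbation analysis of the root equation $(\lambda_+/\lambda_-)^n=-\alpha_-/\alpha_+$ near $\{x_g^\pm\}$, uniform in $n$, together with a separate argument for the odd-indexed polynomials when \cref{thm:rr} (iii) fails: there the two non-real zeros are not attracted to $x_g^\pm$ (which are real when $c\ge c^+$) but are generated by the misplaced positive pair of the seed $W_3$ propagating through the even-step recurrence of \cref{lem:rec2}, so the odd subsequence must be analysed through its own non-isolated limits. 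Reconciling these two mechanisms, and handling the transitional values of $c$ near $c^\pm$, is where I expect the real work to lie.
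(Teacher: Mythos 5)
The statement you are addressing is \cref{conj:only1complex}, which the paper leaves open: it offers no proof, only the partial support of \cref{thm:c:large} (real-rootedness for all sufficiently large $c$, obtained via the discriminant of $W_3$ and \cref{thm:W3:--+-}). So there is no argument of the authors' to compare yours against; I can only assess your programme on its own terms. Your reduction is sound as far as it goes: conjugate symmetry does reduce the conjecture to exhibiting $n-2$ real zeros, your identification of the genuinely open regime ($c^-<c<c^+$, where $\Delta_g<0$ and $x_g^\pm$ leave the real axis, plus the odd-index case in which the hypothesis of \cref{thm:rr}~(iii) fails) is exactly right, and the representation $W_n(z)=2r(z)\rho(z)^n\cos\bg{n\phi(z)+\psi(z)}$ on $(x_\Delta^-,\,x_\Delta^+)$, where $\Delta(z)<0$, is correct.

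Nevertheless the proposal is a research plan with the decisive steps missing, as you yourself acknowledge, and three of the gaps are genuine rather than routine. First, the trigonometric count yields only $n-O(1)$ sign changes: to reach $n-2$ you must bound the total variation of $\psi(z)=\arg\alpha_+(z)$ and control the possible non-monotonicity of $\phi(z)$ across $(x_\Delta^-,\,x_\Delta^+)$, uniformly over the band $c^-<c<c^+$, and neither your sketch nor the paper supplies such a bound with the required margin of $2$. Second, the replacement right-hand anchor --- the sign of $W_n(x_\Delta^+)$ via \cref{W:xd} --- is only asserted: in the band the sign of $h(x_\Delta^+)$ and the position of $n^+$ relative to $n$ are not determined (\cref{--+-:lem:basic} settles them only under $c\le c^-$ or $c\ge c^+$), and if that anchor changes sign at some $n$ the interlacing induction can shed a real zero precisely there. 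Third, and most seriously, once $W_{n-2}$ acquires a conjugate pair of non-real zeros the induction hypothesis weakens: \cref{lem:rec2} gives the sign of $W_n$ only at the $n-4$ real zeros of $W_{n-2}$, so each step can in principle lose two further real zeros, and you need an argument that this loss does not accumulate --- exactly the finite-$n$ strengthening of the Beraha--Kahane--Weiss asymptotics that you flag but do not supply. Until these three points are closed, what you have is a plausible strategy toward the conjecture, not a proof of it.
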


We notice that the polynomial sequence $\{W_n(z)\}_n$ satisfying \cref{rec11} 
can be written as $W_n(z)=f_n(z)+g_n(z)$,
where $\{f_n(z)\}_n$ is a polynomial sequence whose root distribution has been studied by Tran~\cite{Tra14} 
and $\{g_n(z)\}_n$ is a polynomials sequence whose root distribution has been studied by Mai~\cite{Mai18}.
We did not find a modification of any proof of theirs 
which may show \cref{thm:rr}.

\end{document}